\documentclass{amsart}%
\usepackage{amsmath}
\usepackage{amssymb}
\usepackage{amsfonts}
\usepackage{graphicx}%
\setcounter{MaxMatrixCols}{30}
\newtheorem{theorem}{Theorem}[section]
\theoremstyle{plain}

\newtheorem{corollary}[theorem]{Corollary}

\newtheorem{definition}[theorem]{Definition}

\newtheorem{lemma}[theorem]{Lemma}

\newtheorem{problem}{Problem}
\newtheorem{proposition}[theorem]{Proposition}
\newtheorem{remark}[theorem]{Remark}

\numberwithin{equation}{section}
\begin{document}
\title[Supercritical problems of Brezis-Nirenberg type]{Ground states of critical and supercritical problems of Brezis-Nirenberg type}
\author{M\'{o}nica Clapp}
\address{Instituto de Matem\'{a}ticas, Universidad Nacional Aut\'{o}noma de M\'{e}xico,
Circuito Exterior, C.U., 04510 M\'{e}xico D.F., Mexico}
\email{monica.clapp@im.unam.mx}
\author{Angela Pistoia}
\address{Dipartimento di Metodi e Modelli Matematici, Universit\'{a} di Roma "La
Sapienza", via Antonio Scarpa 16, 00161 Roma, Italy}
\email{pistoia@dmmm.uniroma1.it}
\author{Andrzej Szulkin}
\address{Department of Mathematics, Stockholm University, 106 91 Stockholm, Sweden}
\email{andrzejs@math.su.se}
\thanks{M. Clapp is partially supported by CONACYT grant 237661 and PAPIIT-DGAPA-UNAM
grant IN104315 (Mexico), {}A. Pistoia is partially supported by PRIN
2009-WRJ3W7 grant (Italy).}
\date{\today}
\maketitle

\begin{abstract}
We study the existence of symmetric ground states to the supercritical problem%
\[
-\Delta v=\lambda v+\left\vert v\right\vert ^{p-2}v\text{ \ in }\Omega,\qquad
v=0\text{ on }\partial\Omega,
\]
in a domain of the form%
\[
\Omega=\{(y,z)\in\mathbb{R}^{k+1}\times\mathbb{R}^{N-k-1}:\left(  \left\vert
y\right\vert ,z\right)  \in\Theta\},
\]
where $\Theta$ is a bounded smooth domain such that $\overline{\Theta}%
\subset\left(  0,\infty\right)  \times\mathbb{R}^{N-k-1},$ $1\leq k\leq N-3,$
$\lambda\in\mathbb{R},$ and $p=\frac{2(N-k)}{N-k-2}$ is the $(k+1)$-st
critical exponent. We show that symmetric ground states exist for $\lambda$ in
some interval to the left of each symmetric eigenvalue, and that no symmetric
ground states exist in some interval $(-\infty,\lambda_{\ast})$ with
$\lambda_{\ast}>0$ if $k\geq2.$

Related to this question is the existence of ground states to the anisotropic
critical problem%
\[
-\text{div}(a(x)\nabla u)=\lambda b(x)u+c(x)\left\vert u\right\vert ^{2^{\ast
}-2}u\quad\text{in}\ \Theta,\qquad u=0\quad\text{on}\ \partial\Theta,
\]
where $a,b,c$ are positive continuous functions on $\overline{\Theta}.$ We
give a minimax characterization for the ground states of this problem, study
the ground state energy level as a function of $\lambda,$ and obtain a
bifurcation result for ground states.\smallskip

\textsc{Key words:} Supercritical elliptic problem, anisotropic critical
problem, ground states, bifurcation.

\textsc{2010 MSC: }35J61 (35J20, 35J25).

\end{abstract}

\section{Introduction}

We consider the supercritical Brezis-Nirenberg type problem%
\begin{equation}
\left\{
\begin{array}
[c]{ll}%
-\Delta v=\lambda v+\left\vert v\right\vert ^{2_{N,k}^{\ast}-2}v & \text{in
}\Omega,\\
\hspace{0.6cm}v=0 & \text{on }\partial\Omega,
\end{array}
\right.  \tag{$\wp_\lambda$}\label{prob}%
\end{equation}
where $\Omega$ is given by%
\begin{equation}
\Omega:=\{(y,z)\in\mathbb{R}^{k+1}\times\mathbb{R}^{N-k-1}:\left(  \left\vert
y\right\vert ,z\right)  \in\Theta\} \label{omega}%
\end{equation}
for some bounded smooth domain $\Theta$ in $\mathbb{R}^{N-k}$ such that
$\overline{\Theta}\subset\left(  0,\infty\right)  \times\mathbb{R}^{N-k-1},$
$1\leq k\leq N-3,$ $\lambda\in\mathbb{R},$ and $2_{N,k}^{\ast}:=\frac
{2(N-k)}{N-k-2}$ is the so-called $(k+1)$-st critical exponent.

If $k=0$ then $2_{N,0}^{\ast}=2^{\ast}$ is the critical Sobolev exponent and
problem (\ref{prob})\ becomes%
\begin{equation}
\left\{
\begin{array}
[c]{ll}%
-\Delta v=\lambda v+\left\vert v\right\vert ^{2^{\ast}-2}v & \text{in }%
\Theta,\\
\hspace{0.6cm}v=0 & \text{on }\partial\Theta.
\end{array}
\right.  \label{bn}%
\end{equation}
A celebrated result by Brezis and Nirenberg \cite{bn} states that (\ref{bn})
has a ground state $v>0$ if and only if $\lambda\in(0,\lambda_{1})$ and
$N\geq4,$ or if $\lambda\in(\lambda_{\ast},\lambda_{1})$ and $N=3,$ where
$\lambda_{\ast}$ is some number in $(0,\lambda_{1}).$ Moreover, they show that
$\lambda_{\ast}=\frac{\lambda_{1}}{4}>0$ if $\Theta$ is a ball. As usual,
$\lambda_{m}$ denotes the $m$-th Dirichlet eigenvalue of $-\Delta$ in
$\Theta.$

Problem (\ref{bn}) has been widely investigated. Capozzi, Fortunato and
Palmieri \cite{cafp} established the existence of solutions for all
$\lambda>0$ if $N\geq5$ and for all $\lambda\ne\lambda_{m}$ if $N=4$ (see also
\cite{gr, w}). Several multiplicity results are also available, see e.g.
\cite{css,cw,ds1,ds2,z}\ and the references therein.

Recently, Szulkin, Weth and Willem \cite{sww} gave a minimax characterization
for the ground states of problem (\ref{bn}) when $\lambda\geq\lambda_{1}$.
They established the existence of ground states for $\lambda\neq\lambda_{m}$
if $N=4$ and for all $\lambda\ge\lambda_{1}$ if $N\geq5.$

Concerning the supercritical problem (\ref{prob}) with $k\geq1,$ Passaseo
\cite{pa1,pa2} showed that a nontrivial solution does not exist if $\lambda=0$
and $\Theta$ is a ball. This statement was extended in \cite{cfp} to more
general domains $\Theta,$ and to some unbounded domains in \cite{cs}. On the
other hand, existence of multiple solutions has been established in \cite{cf,
kp, wy}.

This work is concerned with the existence of symmetric ground states for the
supercritical problem (\ref{prob}) with $k\geq1.$ Note that the domain
$\Omega$ is invariant under the action of the group $O(k+1)$ of linear
isometries of $\mathbb{R}^{k+1}$ on the first $k+1$ coordinates. A function
$v:\Omega\rightarrow\mathbb{R}$ is called $O(k+1)$-invariant if
$v(gy,z)=v(y,z)$ for every $g\in O(k+1),$ $(y,z)\in\mathbb{R}^{k+1}%
\times\mathbb{R}^{N-k-1}.$ The subspace
\[
H_{0}^{1}(\Omega)^{O(k+1)}:=\{v\in H_{0}^{1}(\Omega):v\text{ is }%
O(k+1)\text{-invariant}\}
\]
of $H_{0}^{1}(\Omega)$ is continuously embedded in $L^{2_{N,k}^{\ast}}%
(\Omega),$ so the energy functional $\mathcal{J}_{\lambda}:H_{0}^{1}%
(\Omega)^{O(k+1)}\rightarrow\mathbb{R}$ given by%
\[
\mathcal{J}_{\lambda}(v):=\frac{1}{2}\int_{\Omega}\left\vert \nabla
v\right\vert ^{2}-\frac{\lambda}{2}\int_{\Omega}v^{2}-\frac{1}{2^{\ast}}%
\int_{\Omega}\left\vert v\right\vert ^{2_{N,k}^{\ast}}%
\]
is well defined. Its critical points are the $O(k+1)$-invariant solutions to
problem (\ref{prob}). An $O(k+1)$-invariant $(PS)_{\tau}$-sequence for
$\mathcal{J}_{\lambda}$ is a sequence $(v_{k})$ such that
\[
v_{k}\in H_{0}^{1}(\Omega)^{O(k+1)},\text{\qquad}\mathcal{J}_{\lambda}%
(v_{k})\rightarrow\tau\text{\qquad and\qquad}\mathcal{J}_{\lambda}^{\prime
}(v_{k})\rightarrow0\text{ in }H^{-1}(\Omega).
\]
We set%
\[
\ell_{\lambda}^{O(k+1)}:=\inf\{\tau>0:\text{ there exists an }%
O(k+1)\text{-invariant }(PS)_{\tau}\text{-sequence for }\mathcal{J}_{\lambda
}\}.
\]
This is the lowest possible energy level for a nontrivial $O(k+1)$-invariant
solution to problem (\ref{prob}). An $O(k+1)$\emph{-invariant ground state of
problem} (\ref{prob}) is a critical point $v\in H_{0}^{1}(\Omega)^{O(k+1)}$ of
$\mathcal{J}_{\lambda}$ such that $\mathcal{J}_{\lambda}(v)=\ell_{\lambda
}^{O(k+1)}.$ Since $\mathcal{J}_{\lambda}$ does not satisfy the Palais-Smale
condition, an $O(k+1)$-invariant ground state does not necessarily exist.

Let $0<\lambda_{1}^{\left[  k\right]  }<\lambda_{2}^{\left[  k\right]  }%
\leq\lambda_{3}^{\left[  k\right]  }\leq\cdots$ be the $O(k+1)$-invariant
eigenvalues of the problem%
\[
-\Delta v=\lambda v\quad\text{in}\ \Omega,\qquad v\in H_{0}^{1}(\Omega
)^{O(k+1)},
\]
counted with their multiplicity. Set $\lambda_{0}^{\left[  k\right]  }:=0.$ We
shall prove the following result for $O(k+1)$-invariant ground states.

\begin{theorem}
\label{thm:main}For every $1\leq k\leq N-3,$ the following statements hold true:

\begin{enumerate}
\item[(a)] Problem \emph{(\ref{prob})} does not have an $O(k+1)$-invariant
ground state if $\lambda\leq0.$

\item[(b)] For each $m\in\mathbb{N}\cup\{0\},$ there is a number
$\lambda_{m,\ast}^{\left[  k\right]  }\in\lbrack\lambda_{m}^{\left[  k\right]
},\lambda_{m+1}^{\left[  k\right]  })$ with the property that problem
\emph{(\ref{prob})} has an $O(k+1)$-invariant ground state for every
$\lambda\in(\lambda_{m,\ast}^{\left[  k\right]  },\lambda_{m+1}^{\left[
k\right]  })$ and does not have an $O(k+1)$-invariant ground state for any
$\lambda\in\lbrack\lambda_{m}^{\left[  k\right]  },\lambda_{m,\ast}^{\left[
k\right]  }).$

\item[(c)] Let $\beta:=\max\{$\emph{dist}$(x,\{0\}\times\mathbb{R}%
^{N-k-1}):x\in\overline{\Theta}\}.$ Then,%
\[
\lambda_{0,\ast}^{\left[  k\right]  }\geq\left\{
\begin{array}
[c]{ll}%
\frac{(k-1)^{2}}{4\beta^{2}} & \text{if }3k\geq N,\\
\frac{k}{\left(  2^{\ast}_{N,k}\beta\right)  ^{2}}\left(  (2_{N,k}^{\ast
}-1)k-2_{N,k}^{\ast}\right)  & \text{if }3k\leq N.
\end{array}
\right.
\]
In particular, $\lambda_{0,\ast}^{\left[  k\right]  }>0$ if $k\geq2.$
\end{enumerate}
\end{theorem}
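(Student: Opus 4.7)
My plan is to reduce the $O(k+1)$-invariant problem on $\Omega$ to an anisotropic critical problem on $\Theta$: the change of variable $v(y,z)=u(|y|,z)$ is an isometry (up to the factor $\omega_{k}$, the area of $S^{k}$) between $H_{0}^{1}(\Omega)^{O(k+1)}$ and a weighted Sobolev space on $\Theta$, transforming $(\wp_\lambda)$ into the anisotropic critical problem
\[
-\mathrm{div}(r^{k}\nabla u)=\lambda r^{k}u+r^{k}|u|^{2_{N,k}^{\ast}-2}u \ \text{in }\Theta,\qquad u=0\ \text{on }\partial\Theta,
\]
with weights $a=b=c=r^{k}$ on $\Theta\subset\mathbb{R}^{N-k}$, where $r=|y|$. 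Parts (a) and (b) then follow from the general theory of the anisotropic problem announced in the abstract: for (a), the ground state energy coincides with the non-attained Sobolev concentration threshold when $\lambda\leq 0$; for (b), testing the min-max level with a linear combination of $O(k+1)$-invariant eigenfunctions and a concentrating bubble gives strict inequality with the threshold for $\lambda$ just below $\lambda_{m+1}^{[k]}$, so Palais--Smale holds and a ground state exists.

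For part (c) I would prove a sharp Hardy--Sobolev inequality
\[
\int_{\Omega}|\nabla v|^{2}\geq A\int_{\Omega}v^{2}+\widetilde S\,\Bigl(\int_{\Omega}|v|^{2_{N,k}^{\ast}}\Bigr)^{2/2_{N,k}^{\ast}},\qquad v\in H_{0}^{1}(\Omega)^{O(k+1)},
\]
where $A$ equals the asserted lower bound and $\widetilde S$ is the sharp constant of the symmetric embedding $H_{0}^{1}(\Omega)^{O(k+1)}\hookrightarrow L^{2_{N,k}^{\ast}}(\Omega)$. Given this, a ground state $v$ at some $\lambda<A$ would be ruled out by a short contradiction: combining the inequality with the Nehari identity $\int|\nabla v|^{2}=\lambda\int v^{2}+\int|v|^{2_{N,k}^{\ast}}$ forces $\int|v|^{2_{N,k}^{\ast}}>\widetilde S^{(N-k)/2}$, hence $\mathcal{J}_{\lambda}(v)=\frac{1}{N-k}\int|v|^{2_{N,k}^{\ast}}>\frac{1}{N-k}\widetilde S^{(N-k)/2}$; but $\ell_{\lambda}^{O(k+1)}\leq\frac{1}{N-k}\widetilde S^{(N-k)/2}$, since bubbles concentrating on $O(k+1)$-orbits at the minimum value of $|y|$ over $\overline\Omega$ saturate this level, contradicting $\mathcal{J}_{\lambda}(v)=\ell_{\lambda}^{O(k+1)}$.

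The inequality itself would be proved after reducing to $\Theta$ and substituting $w=r^{k/2}u$, which gives the algebraic identity
\[
\int_{\Theta}r^{k}|\nabla u|^{2}=\int_{\Theta}|\nabla w|^{2}+\frac{k(k-2)}{4}\int_{\Theta}\frac{w^{2}}{r^{2}}.
\]
The one-dimensional Hardy inequality $\int w_{r}^{2}\geq \frac14\int w^{2}/r^{2}$ then recovers the sharp $(k{+}1)$-dimensional Hardy bound $\int r^{k}u_{r}^{2}\geq \frac{(k-1)^{2}}{4\beta^{2}}\int r^{k}u^{2}$. In the regime $3k\geq N$ this Hardy bound, combined with the anisotropic Sobolev inequality on the remaining gradient terms in dimension $M=N-k$ with critical exponent $2_{M}^{\ast}=2_{N,k}^{\ast}$, yields $A=(k-1)^{2}/(4\beta^{2})$ while preserving the full constant $\widetilde S$. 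In the regime $3k\leq N$ one instead splits $\int w_{r}^{2}$ between Hardy and anisotropic Sobolev and optimizes the splitting parameter, producing $A=k((2_{N,k}^{\ast}-1)k-2_{N,k}^{\ast})/(2_{N,k}^{\ast}\beta)^{2}$; the two formulas coincide at $3k=N$ by direct computation.

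The hardest step will be carrying out the Hardy/Sobolev interpolation in case $3k\leq N$ while preserving the \emph{full} concentration constant $\widetilde S$ in the coefficient of the Sobolev term---any loss of a multiplicative factor below $1$ would collapse the Nehari contradiction above. This depends delicately on the weight $r^{k}$ being precisely the one that makes $2_{N,k}^{\ast}$ critical in the reduced dimension $N-k$, so that the Hardy--Sobolev interpolation is extremal on both sides.
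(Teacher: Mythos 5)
Your reduction of \eqref{prob} to the weighted problem \eqref{prob2} on $\Theta$ and your outline for part (b) (ground state energy $\ell_\lambda$ drops strictly below the compactness threshold $\kappa^{[k]}$ as $\lambda\nearrow\lambda_{m+1}^{[k]}$, plus monotonicity of $\lambda\mapsto\ell_\lambda$) are essentially the paper's route. But parts (a) and (c) each contain a genuine gap. For (a): the statement that $\ell_\lambda$ equals the non-attained concentration threshold for all $\lambda\le 0$ is \emph{not} part of the general anisotropic theory. The paper explicitly leaves open whether $\lambda_{0,\ast}^{a,b,c}\ge 0$ in general, and for weights with a flat interior minimum (Egnell, Hadiji--Yazidi) the opposite happens. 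The inequality $\ell_0\ge\kappa^{[k]}$ for $a=b=c=x_1^k$ requires a separate argument: if $\ell_0<\kappa^{[k]}$, a test function in $\mathcal{N}_0$ with $J_0$ below $\kappa^{[k]}$ can be supported in a ball $B\subset\Theta$ tangent to $\{x_1=\alpha\}$, and the Pucci--Serrin variational identity applied with the vector field $\chi_\tau$ shows the problem on $B$ (a doubly starshaped domain) has no nontrivial solution, forcing $\inf_{\mathcal{N}_0(B)}J_0=\kappa^{[k]}$ --- a contradiction. Your proposal omits this Pohozaev-type nonexistence ingredient entirely, and nothing else in your plan supplies (a) for $k=1$, where your part (c) bound is negative.

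For (c): the interpolation you describe cannot preserve the full Sobolev constant. The substitution $w=x_1^{k/2}u$ correctly produces the zeroth-order term $\frac{k(k-2)}{4}\int w^2/x_1^2$ ``for free,'' which yields only $\lambda_{0,\ast}^{[k]}\ge\frac{k(k-2)}{4\beta^2}$; this vanishes at $k=2$, so it does not prove the key claim $\lambda_{0,\ast}^{[2]}>0$. Upgrading to $\frac{(k-1)^2}{4}$ via the one-dimensional Hardy inequality consumes all of $\int w_{x_1}^2$, leaving $n-1$ partial derivatives for the Sobolev step, and no Sobolev inequality in $\mathbb{R}^n$ with a positive constant holds with $n-1$ partials (test with functions stretched in $x_1$). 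More structurally, any inequality of the form
\[
\int_\Theta|\nabla w|^2\;\ge\;\varepsilon\int_\Theta\frac{w^2}{x_1^2}+S\Bigl(\int_\Theta|w|^{2^*}\Bigr)^{2/2^*},\qquad\varepsilon>0,
\]
would contradict the Brezis--Nirenberg theorem for $n\ge4$. The paper avoids this by substituting $u=x_1^{-\gamma}w$ with $\gamma$ a free parameter in $[k/2^*,k/2]$: the zeroth-order term $\gamma(k-\gamma-1)x_1^{-2}$ comes entirely from integrating by parts the cross term (i.e.\ from the variation of the weight), the \emph{full} weighted gradient $\int x_1^{k-2\gamma}|\nabla w|^2$ remains available for Sobolev, and the sign conditions $k-2\gamma\ge0\ge k-2^*\gamma$ make the powers of $\alpha$ in numerator and denominator combine to exactly $\alpha^{2k/n}S$, hence $\kappa^{[k]}$. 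Maximizing $\gamma(k-\gamma-1)$ over $[k/2^*,k/2]$ gives the two regimes $\gamma_\ast=(k-1)/2$ when $3k\ge N$ and $\gamma_\ast=k/2^*$ when $3k\le N$; your fixed choice $\gamma=k/2$ is never the maximizer.
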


This last statement stands in contrast with the case $k=0$ where a ground
state to problem (\ref{bn}) exists for every $\lambda\in\lbrack0,\lambda_{1})$
if $N\geq4.$ We also show that $\lambda_{0,\ast}^{\left[  1\right]  }>0$ if
$\Theta$ is thin enough, see Proposition \ref{prop:k=1}.

As we shall see, the $O(k+1)$-invariant ground states of problem (\ref{prob})
correspond to the ground states of the critical problem
\begin{equation}
-\text{div}(a(x)\nabla u)=\lambda b(x)u+c(x)\left\vert u\right\vert ^{2^{\ast
}-2}u\quad\text{in}\ \Theta,\qquad u=0\quad\text{on}\ \partial\Theta,
\label{prob1}%
\end{equation}
with\ $2^{\ast}=\frac{2n}{n-2},$ $n:=\dim\Theta,$ $a(x_{1},...,x_{n}%
)=x_{1}^{k}$ and $a=b=c.$

The critical problem (\ref{prob1}) with general coefficients $a\in
\mathcal{C}^{1}(\overline{\Theta}),$ $b,c\in\mathcal{C}^{0}(\overline{\Theta
})$ has an interest in its own. We study it in section \ref{sec:anisotropic}
and give a minimax characterization for its ground states, similar to that in
\cite{sww}. We study the properties of its ground state energy level as a
function of $\lambda,$ and obtain a bifurcation result for ground states, see
Theorem \ref{thm:bifurcation}.

Anisotropic critical problems of the form (\ref{prob1}) have been studied, for
example, by Egnell \cite{e} and, more recently, by Hadiji et al. \cite{hmpy,
hy}. They obtained existence and multiplicity results under some assumptions
which involve flatness of the coefficient functions at some local maximum or
minimum point in the interior of $\Theta.$ Note that the function
$a(x_{1},...,x_{n})=x_{1}^{k}$ attains its minimum on the boundary of
$\Theta.$ This produces a quite different behavior regarding the existence of
ground states, as we shall see in the following sections.

Section \ref{sec:anisotropic} is devoted to the study of the general
anisotropic critical problem. In section \ref{sec:nonexistence} we prove a
nonexistence result for supercritical problems. It will be used in Section
\ref{sec:proof} where we prove Theorem \ref{thm:main}. In the last section we
include some questions and remarks.

\section{Ground states of the anisotropic critical problem}

\label{sec:anisotropic}In this section we consider the anisotropic
Brezis-Nirenberg type problem
\begin{equation}
\left\{
\begin{array}
[c]{ll}%
-\,\text{div}(a(x)\nabla u)=\lambda b(x)u+c(x)\left\vert u\right\vert
^{{2}^{\ast}-2}u & \text{in}\ \Theta,\\
\hspace{2.2cm}u=0 & \text{on}\ \partial\Theta,
\end{array}
\right.  \label{ancrit}%
\end{equation}
where $\Theta$ is a bounded smooth domain in $\mathbb{R}^{n},$ $n\geq3,$
$\lambda\in\mathbb{R}$, $a\in\mathcal{C}^{1}(\overline{\Theta}),$
$b,c\in\mathcal{C}^{0}(\overline{\Theta})$ are strictly positive on
$\overline{\Theta},$ and $2^{\ast}:=\frac{2n}{n-2}$ is the critical Sobolev
exponent in dimension $n.$

We take
\begin{equation}
\left\langle u,v\right\rangle _{a}:=\int_{\Theta}a(x)\nabla u\cdot\nabla
v,\text{\qquad}\left\Vert u\right\Vert _{a}:=\left(  \int_{\Theta
}a(x)\left\vert \nabla u\right\vert ^{2}\right)  ^{1/2}, \label{scalprod}%
\end{equation}
to be the scalar product and the norm in $H_{0}^{1}(\Theta),$ and%
\[
\left\vert u\right\vert _{b,2}:=\left(  \int_{\Theta}b(x)u^{2}\right)
^{1/2},\qquad\left\vert u\right\vert _{c,2^{\ast}}:=\left(  \int_{\Theta
}c(x)\left\vert u\right\vert ^{2^{\ast}}\right)  ^{1/2^{\ast}},
\]
to be the norms in $L^{2}(\Theta)$ and $L^{2^{\ast}}(\Theta)$ respectively.
They are, clearly, equivalent to the standard ones.

Let $0<\lambda_{1}^{a,b}<\lambda_{2}^{a,b}\leq\lambda_{3}^{a,b}\leq\cdots$ be
the eigenvalues of the problem%
\[
-\text{div}(a(x)\nabla u)=\lambda b(x)u\quad\text{in}\ \Theta,\qquad
u=0\quad\text{on}\ \partial\Theta,
\]
counted with their multiplicity, and $e_{1},e_{2},e_{3},...$ be the
corresponding normalized eigenfunctions, i.e. $\left\vert e_{j}\right\vert
_{b,2}=1.$ Set
\begin{align*}
Z_{0}  &  :=\{0\},\qquad Z_{m}:=\text{span}\{e_{1},...,e_{m}\},\\
Y_{m}  &  :=\{w\in H_{0}^{1}(\Theta):\left\langle w,z\right\rangle
_{a}=0\text{ for all }z\in Z_{m}\},\\
T_{0}  &  :=(-\infty,\lambda_{1}^{a,b}), \quad\text{and} \quad T_{m}%
:=[\lambda_{m}^{a,b},\lambda_{m+1}^{a,b}) \text{ if } m\in\mathbb{N}.
\end{align*}

The solutions to problem (\ref{ancrit}) are the critical points of the
functional $J_{\lambda}:H_{0}^{1}(\Theta)\rightarrow\mathbb{R}$ given by%
\[
J_{\lambda}(u):=\frac{1}{2}\left\Vert u\right\Vert _{a}^{2}-\frac{\lambda}%
{2}\left\vert u\right\vert _{b,2}^{2}-\frac{1}{2^{\ast}}\left\vert
u\right\vert _{c,2^{\ast}}^{2^{\ast}}.
\]
If $\lambda\in T_{m}$ we define%
\[
\mathcal{N}_{\lambda}\equiv\mathcal{N}_{\lambda}(\Theta):=\{u\in H_{0}%
^{1}(\Theta)\smallsetminus Z_{m}:J_{\lambda}^{\prime}(u)u=0\text{ and
}J_{\lambda}^{\prime}(u)z=0\text{ for all }z\in Z_{m}\}.
\]
This is a $\mathcal{C}^{1}$-submanifold of codimension $m+1$ in $H_{0}%
^{1}(\Theta)$, cf. \cite{sww}. If $\lambda<\lambda_{1}^{a,b}$ it is the usual
Nehari manifold, and if $\lambda\geq\lambda_{1}^{a,b}$ it is the generalized
Nehari manifold, introduced by Pankov in \cite{p} and studied by Szulkin and
Weth in \cite{sw1, sw2}. Note that $J_{\lambda}^{\prime}(z)z<0$ for all $z\in
Z_{m}\smallsetminus\{0\}.$ Clearly, the nontrivial critical points of
$J_{\lambda}$ belong to $\mathcal{N}_{\lambda}.$ Moreover, they coincide with
the critical points of its restriction $J_{\lambda}|_{\mathcal{N}_{\lambda}%
}:\mathcal{N}_{\lambda}\rightarrow\mathbb{R}$. The proof of these facts is
completely analogous to the one given in \cite{sww}\ for the autonomous case.
Set%
\[
\ell_{\lambda}\equiv\ell_{\lambda}^{a,b,c}:=\inf_{\mathcal{N}_{\lambda}%
}J_{\lambda}.
\]
Following \cite{sw1} one shows that, for every $w\in Y_{m}\smallsetminus
\{0\},$ there exist unique $t_{\lambda,w}\in(0,\infty)$ and $z_{\lambda,w}\in
Z_{m}$ such that%
\[
t_{\lambda,w}w+z_{\lambda,w}\in\mathcal{N}_{\lambda},
\]
and that%
\[
J_{\lambda}(t_{\lambda,w}w+z_{\lambda,w})=\max_{t>0,\,z\in Z_{m}}J_{\lambda
}(tw+z).
\]
Let $\Sigma_{m}:=\{w\in Y_{m}:\left\Vert w\right\Vert _{a}=1\}$ be the unit
sphere in $Y_{m}.$ Then,%
\begin{equation}
\label{aaa}\ell_{\lambda}=\inf_{w\in\Sigma_{m}}\max_{\substack{t>0,\\z\in
Z_{m}}}J_{\lambda}(tw+z).
\end{equation}

As usual, we denote the best Sobolev constant for the embedding $H^{1}%
(\mathbb{R}^{n})\hookrightarrow L^{2^{\ast}}(\mathbb{R}^{n})$ by $S.$ We set%
\[
\kappa^{a,c}:=\left(  \min\limits_{x\in\overline{\Theta}}\frac{a(x)^{\frac
{n}{2}}}{c(x)^{\frac{n-2}{2}}}\right)  \frac{1}{n}S^{\frac{n}{2}},
\]
and define%
\[
\lambda_{m,\ast}^{a,b,c}:=\inf\{\lambda\in T_{m}:\ell_{\lambda}<\kappa
^{a,c}\}.
\]

\begin{theorem}
\label{thm:bifurcation}For every $m\in\mathbb{N}\cup\{0\}$ the following
statements hold true:

\begin{enumerate}
\item[(a)] The function $\lambda\longmapsto\ell_{\lambda}$ is nonincreasing in
$T_{m}$ and
\[
0<\ell_{\lambda}\leq\kappa^{a,c}\text{\qquad for all \ }\lambda\in T_{m}.
\]

\item[(b)] $\ell_{\lambda}$ is attained on $\mathcal{N}_{\lambda}$ if
$\ell_{\lambda}<\kappa^{a,c}.$

\item[(c)] The function $\lambda\longmapsto\ell_{\lambda}$ is continuous in
$T_{m}$ and
\[
\lim_{\lambda\nearrow\lambda_{m+1}^{a,b}}\ell_{\lambda}=0.
\]
Hence, $\lambda_{m,\ast}^{a,b,c}<\lambda_{m+1}^{a,b}.$

\item[(d)] $\ell_{\lambda}$ is not attained if $\lambda\in(-\infty
,\lambda_{0,\ast}^{a,b,c})$ or $\lambda\in\lbrack\lambda_{m}^{a,b}%
,\lambda_{m,\ast}^{a,b,c}),$ $m\geq1,$ and is attained if $\lambda\in
(\lambda_{m,\ast}^{a,b,c},\lambda_{m+1}^{a,b}).$
\end{enumerate}
\end{theorem}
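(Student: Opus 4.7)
The plan is to exploit the minimax characterization (\ref{aaa}) throughout, and to interpret $\kappa^{a,c}$ as the compactness threshold corresponding to a single anisotropic ``bubble'' concentrating at a point of $\overline{\Theta}$ where $a^{n/2}/c^{(n-2)/2}$ is minimal. For the soft statements (a) and (c), monotonicity is immediate from (\ref{aaa}): for each fixed $w\in\Sigma_m$ and each $(t,z)$ with $tw+z\neq 0$, $J_\lambda(tw+z)$ strictly decreases in $\lambda$, so the inner supremum and hence $\ell_\lambda$ are nonincreasing. Positivity $\ell_\lambda>0$ is a standard Nehari-manifold estimate, and the upper bound $\ell_\lambda\leq\kappa^{a,c}$ is obtained by plugging into (\ref{aaa}) a test function built from a truncated Aubin--Talenti instanton concentrating at a point where $a^{n/2}/c^{(n-2)/2}$ attains its minimum. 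Continuity of $\lambda\mapsto\ell_\lambda$ in $T_m$ follows because $\lambda\mapsto\max_{t>0,z\in Z_m}J_\lambda(tw+z)$ is continuous locally uniformly in $w\in\Sigma_m$, and combining this with monotonicity pinches both one-sided limits. For the limit at $\lambda_{m+1}^{a,b}$ I use $w:=e_{m+1}/\sqrt{\lambda_{m+1}^{a,b}}\in\Sigma_m$; since $e_{m+1}$ is both $a$- and $b$-orthogonal to $Z_m$, the quadratic part of $J_\lambda(tw+z)$ splits, the coefficient $1-\lambda/\lambda_{m+1}^{a,b}$ of $t^2$ degenerates, and a direct computation of $\max_{t>0,z\in Z_m}J_\lambda(tw+z)$ shows it tends to $0$, whence $\ell_\lambda\to 0$ and $\lambda_{m,\ast}^{a,b,c}<\lambda_{m+1}^{a,b}$.

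For the existence result in (b), I take a minimizing sequence $u_n\in\mathcal{N}_\lambda$ and use Ekeland's variational principle on this $\mathcal{C}^1$-submanifold to upgrade it to a Palais--Smale sequence for $J_\lambda$. A weighted Lions-type concentration-compactness analysis then shows that any loss of compactness must produce at least one bubble concentrating at some $\bar x\in\overline{\Theta}$ and carrying energy no less than $\frac{1}{n}a(\bar x)^{n/2}c(\bar x)^{-(n-2)/2}S^{n/2}\geq\kappa^{a,c}$. The strict inequality $\ell_\lambda<\kappa^{a,c}$ therefore rules bubbling out; the sequence converges strongly and its limit belongs to $\mathcal{N}_\lambda$ and realises $\ell_\lambda$.

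For the dichotomy (d), attainment on $(\lambda_{m,\ast}^{a,b,c},\lambda_{m+1}^{a,b})$ is immediate from (b) and the definition of $\lambda_{m,\ast}^{a,b,c}$. For non-attainment on the lower intervals, where monotonicity forces $\ell_\lambda=\kappa^{a,c}$, suppose $u\in\mathcal{N}_\lambda$ realises this value. Since $\mathcal{N}_\lambda\cap Z_m=\emptyset$, the $Y_m$-component $u_+$ of $u$ is nonzero, so $w:=u_+/\|u_+\|_a\in\Sigma_m$ and the minimax formula gives $\kappa^{a,c}=J_\lambda(u)=\max_{t>0,z\in Z_m}J_\lambda(tw+z)$. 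For any $\lambda'\in(\lambda,\lambda_{m+1}^{a,b})$, letting $(t^*,z^*)$ be a maximizer of $(t,z)\mapsto J_{\lambda'}(tw+z)$ on $(0,\infty)\times Z_m$ (its existence is guaranteed by the generalized Nehari theory, and $t^*>0$), we have $t^*w+z^*\neq 0$, and hence $\ell_{\lambda'}\leq J_{\lambda'}(t^*w+z^*)<J_\lambda(t^*w+z^*)\leq\kappa^{a,c}$. This forces $\lambda_{m,\ast}^{a,b,c}\leq\lambda'$ for every such $\lambda'$, hence $\lambda_{m,\ast}^{a,b,c}\leq\lambda$, contradicting $\lambda<\lambda_{m,\ast}^{a,b,c}$. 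I expect the main obstacle to be the weighted concentration-compactness analysis in (b): the anisotropy in $a,b,c$ and the generalized Nehari constraint (orthogonality to $Z_m$ in the $a$-weighted inner product) must be tracked carefully so that the only admissible bubble energy comes out as $\kappa^{a,c}$; the remaining parts largely reduce to bookkeeping from (\ref{aaa}).
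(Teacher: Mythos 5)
Your overall architecture matches the paper's proof (monotonicity from the minimax formula, Ekeland plus a compactness threshold for (b), one-sided limits for (c), and the strict-monotonicity contradiction for (d)), and your argument for (d) is essentially identical to the paper's. But two steps are genuinely incomplete as stated.

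First, the upper bound $\ell_\lambda\leq\kappa^{a,c}$ for $m\geq 1$. Formula \eqref{aaa} requires you to bound $\max_{t>0,\,z\in Z_m}J_\lambda(tu_k+z)$, not merely $\max_{t>0}J_\lambda(tu_k)$; ``plugging in a truncated instanton'' only controls the latter. The coupling terms in $t$ and $z$ must be beaten, and the paper does this with a specific device: choosing an open set $\theta$ disjoint from the supports of the $u_k$, using unique continuation to see that $\left(\int_\theta c|z|^{2^\ast}\right)^{1/2^\ast}$ is a norm on the finite-dimensional $Z_m$, and then a convexity inequality for $|tu_k+z|_{c,2^\ast}^{2^\ast}$ that produces a term $-A\|z\|_a^{2^\ast}$ dominating everything linear in $z$. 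Without some such argument the bound $\max_{t,z}J_\lambda(tu_k+z)\leq\kappa^{a,c}+o(1)$ is not established. (A smaller point: the minimizer $\xi$ of $a^{n/2}/c^{(n-2)/2}$ may lie on $\partial\Theta$ --- indeed it does in the application --- so the instanton must be centered at interior points $\xi+\nu/k$ approaching $\xi$.)

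Second, right-continuity in (c). You assert that $\lambda\mapsto\max_{t>0,z\in Z_m}J_\lambda(tw+z)$ is continuous ``locally uniformly in $w\in\Sigma_m$'' and pinch. For a monotone infimum of continuous functions, upper semicontinuity is free and gives left-continuity, but right-continuity is exactly where uniformity in $w$ is needed, and it fails over all of $\Sigma_m$: by the envelope theorem the $\lambda$-derivative of $I_\lambda(w)$ is $-\tfrac12|t_{\lambda,w}w+z_{\lambda,w}|_{b,2}^2$, and $t_{\lambda,w}\to\infty$ as $|w|_{c,2^\ast}\to 0$, so the family is not equicontinuous in $\lambda$ uniformly in $w$. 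The paper instead argues by contradiction: if $\sup_j\ell_{\mu_j}<\ell_\mu$ with $\mu_j\searrow\mu$, then $\ell_{\mu_j}<\kappa^{a,c}$, part (b) supplies minimizers $w_j$, the lower bound \eqref{compNeh} forces $|w_j|_{c,2^\ast}\geq\varepsilon$, and only then are $t_{\mu,w_j}$ and $z_{\mu,w_j}$ bounded, allowing the pinching $\ell_\mu\leq\ell_{\mu_j}+o(1)$. Your argument can be repaired along these lines, but the uniformity claim as written is a gap, and note that the repair makes (c) depend on (b).
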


\begin{remark}
\emph{ It follows from part (c) above that bifurcation (to the left) occurs at
each $\lambda_{m}^{a,b}$. This fact is essentially known and can be obtained
by other methods. However, we would like to emphasize that here we show that
our bifurcating solutions are\emph{ ground states}. }
\end{remark}

\begin{proof}
[Proof of Theorem 2.1](a):\quad Let $\lambda,\mu\in T_{m}.$ If $\lambda\leq
\mu$ then $J_{\lambda}(u)\geq J_{\mu}(u)$ for every $u\in H_{0}^{1}(\Theta).$
So $\ell_{\lambda}\geq\ell_{\mu}$ according to \eqref{aaa}. This proves that
$\lambda\longmapsto\ell_{\lambda}$ is nonincreasing in $T_{m}$.

If $\lambda\in T_{m}$ and $w\in\Sigma_{m}$ we have that%
\begin{align}
\max_{t>0,\,z\in Z_{m}}J_{\lambda}(tw+z)  &  \geq\max_{t>0}J_{\lambda
}(tw)=\frac{1}{n}\left(  \frac{\left\Vert w\right\Vert _{a}^{2}-\lambda
\left\vert w\right\vert _{b,2}^{2}}{\left\vert w\right\vert _{c,2^{\ast}}^{2}%
}\right)  ^{n/2}\label{compNeh}\\
&  \geq\frac{1}{n}\left(  \frac{1-\frac{\lambda}{\lambda_{m+1}}}{\left\vert
w\right\vert _{c,2^{\ast}}^{2}}\right)  ^{n/2}.\nonumber
\end{align}
Using Sobolev's inequality we conclude that there is a positive constant $C$
such that%
\[
\max_{t>0,\,z\in Z_{m}}J_{\lambda}(tw+z)\geq C\text{\qquad for all }w\in
\Sigma_{m}.
\]
Therefore, $\ell_{\lambda}>0.$

Let $\varphi_{k}\in\mathcal{C}_{c}^{\infty}(\mathbb{R}^{n})$ be a positive
function such that supp$(\varphi_{k})\subset B_{1/k}(0)$ and $\int\left\vert
\nabla\varphi_{k}\right\vert ^{2}\rightarrow S^{n/2}$, $\int\left\vert
\varphi_{k}\right\vert ^{2^{\ast}}\rightarrow S^{n/2},$ where $B_{r}%
(\xi)=\{x\in\mathbb{R}^{n}:\left\vert x-\xi\right\vert <r\}.$ Let $\xi
\in\overline{\Theta}$ be such that
\[
\frac{a(\xi)^{\frac{n}{2}}}{c(\xi)^{\frac{n-2}{2}}}=\min\limits_{x\in
\overline{\Theta}}\frac{a(x)^{\frac{n}{2}}}{c(x)^{\frac{n-2}{2}}}%
\]
and choose $\nu\in\mathbb{R}^{n}$ with $\left\vert \nu\right\vert =1$ such
that $\nu$ is the inward pointing unit normal at $\xi$ if $\xi\in
\partial\Theta.$ Set $\xi_{k}:=\xi+\frac{1}{k}\nu$ and $u_{k}(x):=\varphi
_{k}(x-\xi_{k}).$ Then $u_{k}\in H_{0}^{1}(\Theta)$ for $k$ large enough, and
we have that%
\begin{align}
\max_{t>0}J_{\lambda}(tu_{k})  &  =\frac{1}{n}\left(  \frac{\left\Vert
u_{k}\right\Vert _{a}^{2}-\lambda\left\vert u_{k}\right\vert _{b,2}^{2}%
}{\left\vert u_{k}\right\vert _{c,2^{\ast}}^{2}}\right)  ^{\frac{n}{2}%
}\label{eq:1}\\
&  =\frac{1}{n}\left(  \frac{\int_{B_{1/k}(\xi_{k})}a(x)\left\vert \nabla
u_{k}\right\vert ^{2}-\lambda\int_{B_{1/k}(\xi_{k})}b(x)u_{k}^{2}}{\left(
\int_{B_{1/k}(\xi_{k})}c(x)\left\vert u_{k}\right\vert ^{2^{\ast}}\right)
^{2/2^{\ast}}}\right)  ^{\frac{n}{2}}\nonumber\\
&  \longrightarrow\frac{1}{n}\left(  \frac{a(\xi)^{\frac{n}{2}}}{c(\xi
)^{\frac{n-2}{2}}}\right)  S^{\frac{n}{2}}=\kappa^{a,c}\qquad\text{as
}k\rightarrow\infty.\nonumber
\end{align}
Hence, $\ell_{\lambda}\leq\kappa^{a,c}$ for $\lambda<\lambda_{1}^{a,b}.$

Next, we assume that $\lambda\in T_{m}$ with $m\in\mathbb{N}$. We fix an open
subset $\theta$ of $\Theta$ such that $\theta\cap B_{1/k}(\xi_{k})=\emptyset$
for $k$ large enough. If $z\in Z_{m}$ and $z=0$ in $\theta$ then $z=0$ in
$\Theta,$ see \cite[Lemma 3.3]{sww}. Hence, $(\int_{\theta}c(x)\left\vert
z\right\vert ^{2^{\ast}})^{1/2^{\ast}}$ is a norm in $Z_{m}$ and, since
$Z_{m}$ is finite-dimensional, this norm is equivalent to $\left\Vert
z\right\Vert _{a}.$ In particular, there is a positive constant $A$ such that
$\int_{\theta}c(x)\left\vert z\right\vert ^{2^{\ast}}\geq2^{\ast}A\left\Vert
z\right\Vert _{a}^{2^{\ast}}$ for all $z\in Z_{m}.$ It follows by convexity
that, for every $t>0$ and every $z\in Z_{m},$ we have%
\begin{align*}
\left\vert tu_{k}+z\right\vert _{c,2^{\ast}}^{2^{\ast}}  &  =\int
_{\Theta\smallsetminus\theta}c(x)\left\vert tu_{k}+z\right\vert ^{2^{\ast}%
}+\int_{\theta}c(x)\left\vert z\right\vert ^{2^{\ast}}\\
&  \geq t^{2^{\ast}}\int_{\Theta}c(x)u_{k}^{2^{\ast}}+2^{\ast}t^{2^{\ast}%
-1}\int_{\Theta}c(x)u_{k}^{2^{\ast}-1}z+2^{\ast}A\left\Vert z\right\Vert
_{a}^{2^{\ast}}.
\end{align*}
Therefore,%
\begin{align}
J_{\lambda}(tu_{k}+z)  &  \leq J_{0}(tu_{k})-\frac{\lambda}{2}\left\vert
tu_{k}\right\vert _{b,2}^{2}+t\int_{\Theta}\left(  a(x)\nabla u_{k}\nabla
z-\lambda b(x)u_{k}z\right) \label{eq:2}\\
&  \qquad+\frac{1}{2}\left(  \left\Vert z\right\Vert _{a}^{2}-\lambda
\left\vert z\right\vert _{b,2}^{2}\right)  -t^{2^{\ast}-1}\int_{\Theta
}c(x)u_{k}^{2^{\ast}-1}z-A\left\Vert z\right\Vert _{a}^{2^{\ast}}\nonumber\\
&  \leq J_{0}(tu_{k})+t\int_{\Theta}\left(  a(x)\nabla u_{k}\nabla z-\lambda
b(x)u_{k}z\right) \nonumber\\
&  \qquad-t^{2^{\ast}-1}\int_{\Theta}c(x)u_{k}^{2^{\ast}-1}z-A\left\Vert
z\right\Vert _{a}^{2^{\ast}}.\nonumber
\end{align}
Consequently,%
\[
J_{\lambda}(tu_{k}+z)\leq B(t^{2}+t\left\Vert z\right\Vert _{a}+t^{2^{\ast}%
-1}\left\Vert z\right\Vert _{a})-C(t^{2^{\ast}}+\left\Vert z\right\Vert
_{a}^{2^{\ast}})
\]
for some positive constants $B$ and $C$. This implies that there exists $R>0$
such that $J_{\lambda}(tu_{k}+z)\leq0$ for all $t\geq R,$ $z\in Z_{m}$ and $k$
large enough. On the other hand, for $t\leq R,$ $z\in Z_{m}$ and $k$ large
enough, since $\varphi_{k}\rightharpoonup0$ weakly in $H_{0}^{1}(\Theta),$
inequalities (\ref{eq:2}) and (\ref{eq:1}) imply that%
\[
J_{\lambda}(tu_{k}+z)\leq J_{0}(tu_{k})+o(1)=\kappa^{a,c}+o(1).
\]
This proves that $\ell_{\lambda}\leq\kappa^{a,c}$ for $\lambda\geq\lambda
_{1}^{a,b}$ and concludes the proof of statement (a).

(b):\quad Let $I_{\lambda}:\Sigma_{m}\rightarrow\mathbb{R}$ be the function
given by
\[
I_{\lambda}(w):=J_{\lambda}(t_{\lambda,w}w+z_{\lambda,w}).
\]
Then $\ell_{\lambda}:=\inf_{w\in\Sigma_{m}}I_{\lambda}(w).$ It is shown in
\cite{sw1, sw2}\ that $I_{\lambda}\in\mathcal{C}^{1}(\Sigma_{m},\mathbb{R})$.
Since $\Sigma_{m}$ is a smooth submanifold of $H_{0}^{1}(\Theta)$, Ekeland's
variational principle yields a Palais-Smale sequence $(w_{k})$ for
$I_{\lambda}$ such that $I_{\lambda}(w_{k})\rightarrow\ell_{\lambda},$ cf.
\cite[Theorem 8.5]{w}. Set $u_{k}:=t_{\lambda,w_{k}}w_{k}+z_{\lambda,w_{k}}.$
By Corollary 2.10 in \cite{sw1} or Corollary 33 in \cite{sw2}, $(u_{k})$ is a
Palais-Smale sequence for $J_{\lambda}.$ Now, Corollary 3.2 in \cite{cf}
asserts that every Palais-Smale sequence $(u_{k})$ for $J_{\lambda}$ such that
$J_{\lambda}(u_{k})\rightarrow\tau<\kappa^{a,c},$ contains a convergent
subsequence. It follows that $\ell_{\lambda}$ is attained on $\mathcal{N}%
_{\lambda}$ if $\ell_{\lambda}<\kappa^{a,c}.$

(c):\quad Let $w\in\Sigma_{m}$. First, we will show that the function
$\lambda\longmapsto I_{\lambda}(w)$ is continuous in $T_{m}.$ Let $\mu_{j}%
,\mu\in T_{m}$ be such that $\mu_{j}\rightarrow\mu.$ A standard argument shows
that $J_{\mu_{j}}(tw+z)\leq0$ for every $j\in\mathbb{N}$ if $t^{2}+\left\Vert
z\right\Vert _{a}^{2}$ is large enough. Therefore, the sequences $(t_{\mu
_{j},w})$ and $(z_{\mu_{j},w})$ are bounded and, after passing to a
subsequence, $t_{\mu_{j},w}\rightarrow t_{0}$ in $[0,\infty)$ and $z_{\mu
_{j},w}\rightarrow z_{0}$ in $Z_{m}.$ Hence,%
\[
I_{\mu_{j}}(w)=J_{\mu_{j}}(t_{\mu_{j},w}w+z_{\mu_{j},w})\rightarrow J_{\mu
}(t_{0}w+z_{0})\leq I_{\mu}(w).
\]
If $J_{\mu}(t_{0}w+z_{0})<I_{\mu}(w)$ then, since%
\[
J_{\mu_{j}}(t_{\mu,w}w+z_{\mu,w})\rightarrow J_{\mu}(t_{\mu,w}w+z_{\mu
,w})=I_{\mu}(w),
\]
we would have that, for $j$ large enough,%
\[
\max_{t>0,\,z\in Z_{m}}J_{\mu_{j}}(tw+z)=J_{\mu_{j}}(t_{\mu_{j},w}w+z_{\mu
_{j},w})<J_{\mu_{j}}(t_{\mu,w}w+z_{\mu,w})\text{,}%
\]
which is a contradiction. Consequently, $I_{\mu_{j}}(w)\rightarrow I_{\mu
}(w).$ This proves that $\lambda\longmapsto I_{\lambda}(w)$ is continuous in
$T_{m}$ for each $w\in\Sigma_{m}.$

Next, we prove that the function $\lambda\longmapsto\ell_{\lambda}$ is
continuous from the left in $T_{m}$. Let $\mu_{j},\mu\in T_{m}$ be such that
$\mu_{j}\leq\mu$ and $\mu_{j}\rightarrow\mu.$ Since the infimum of any family
of continuous functions is upper semicontinuous and $\lambda\longmapsto
\ell_{\lambda}$ is nonincreasing, we have that
\[
\limsup_{j\rightarrow\infty}\ell_{\mu_{j}}\leq\ell_{\mu}\leq\liminf
_{j\rightarrow\infty}\ell_{\mu_{j}}.
\]
This proves that $\lambda\longmapsto\ell_{\lambda}$ is continuous from the
left in $T_{m}$.

To prove that $\lambda\longmapsto\ell_{\lambda}$ is continuous from the right
in $T_{m}$ we argue by contradiction. Assume there are $\mu_{j},\mu\in T_{m}$
such that $\mu_{j}\geq\mu$, $\mu_{j}\rightarrow\mu$ and $\sup_{j\in\mathbb{N}%
}\ell_{\mu_{j}}<\ell_{\mu}.$ Then $\ell_{\mu_{j}}<\kappa^{a,c}$ and, by
statement (b), there exists $w_{j}\in\Sigma_{m}$ such that $\ell_{\mu_{j}%
}=J_{\mu_{j}}(t_{\mu_{j},w}w_{j}+z_{\mu_{j},w}).$ Inequality (\ref{compNeh})
asserts that
\[
\ell_{\mu}>\ell_{\mu_{j}}=J_{\mu_{j}}(t_{\mu_{j},w_{j}}w_{j}+z_{\mu_{j},w_{j}%
})\geq\frac{1}{n}\left(  \frac{1-\frac{\mu_{j}}{\lambda_{m+1}}}{\left\vert
w_{j}\right\vert _{c,2^{\ast}}^{2}}\right)  ^{n/2}.
\]
This implies that\ $\left\vert w_{j}\right\vert _{c,2^{\ast}}^{2^{\ast}}%
\geq\varepsilon>0$ for all $j\in\mathbb{N}.$ Denote the closure of $Y_{m}$ in
$L^{2^{\ast}}(\Theta)$ by $\widetilde{Y}_{m}$. Since dim$(Z_{m})<\infty,$ the
projection $\widetilde{Y}_{m}\oplus Z_{m}\rightarrow\widetilde{Y}_{m}$ is
continuous in $L^{2^{\ast}}(\Theta).$ Hence, there is a positive constant
$A_{0}$ such that%
\begin{align*}
\ell_{\mu}  &  \leq J_{\mu}(t_{\mu,w_{j}}w_{j}+z_{\mu,w_{j}})\\
&  =\frac{t_{\mu,w_{j}}^{2}}{2}(1-\mu\left\vert w_{j}\right\vert _{b,2}%
^{2})+\frac{1}{2}(\left\Vert z_{\mu,w_{j}}\right\Vert _{a}^{2}-\mu\left\vert
z_{\mu,w_{j}}\right\vert _{b,2}^{2})-\frac{1}{2^{\ast}}\left\vert t_{\mu
,w_{j}}w_{j}+z_{\mu,w_{j}}\right\vert _{c,2^{\ast}}^{2^{\ast}}\\
&  \leq\frac{t_{\mu,w_{j}}^{2}}{2}-A_{0}\frac{t_{\mu,w_{j}}^{2^{\ast}}%
}{2^{\ast}}\left\vert w_{j}\right\vert _{c,2^{\ast}}^{2^{\ast}}\leq
\frac{t_{\mu,w_{j}}^{2}}{2}-A_{0}\varepsilon\frac{t_{\mu,w_{j}}^{2^{\ast}}%
}{2^{\ast}}\text{\qquad for all }j\in\mathbb{N}.
\end{align*}
It follows that $(t_{\mu,w_{j}})$ is bounded. Hence, $(\Vert z_{\mu,w_{j}%
}\Vert_{a})$ is bounded too. Consequently,%
\begin{align*}
\ell_{\mu}  &  \leq J_{\mu}(t_{\mu,w_{j}}w_{j}+z_{\mu,w_{j}})=J_{\mu_{j}%
}(t_{\mu,w_{j}}w_{j}+z_{\mu,w_{j}})+(\mu-\mu_{j})\left\vert t_{\mu,w_{j}}%
w_{j}+z_{\mu,w_{j}}\right\vert _{b,2}^{2}\\
&  \leq J_{\mu_{j}}(t_{\mu_{j},w_{j}}w_{j}+z_{\mu_{j},w_{j}})+o(1)=\ell
_{\mu_{j}}+o(1)\leq\sup_{j\in\mathbb{N}}\ell_{\mu_{j}}+o(1)<\ell_{\mu}+o(1).
\end{align*}
This is a contradiction. It follows that the function $\lambda\longmapsto
\ell_{\lambda}$ is continuous in $T_{m}$.

Finally, let $\mu_{j}\in T_{m}$ be such that $\mu_{j}\rightarrow\lambda
_{m+1}.$ We have that%
\begin{align*}
0  &  <\ell_{\mu_{j}}\leq J_{\mu_{j}}(t_{\mu_{j},e_{m+1}}e_{m+1}+z_{\mu
_{j},e_{m+1}})\\
&  =\frac{t_{\mu_{j},e_{m+1}}^{2}}{2}(\lambda_{m+1}-\mu_{j})+\frac{1}%
{2}(\left\Vert z_{\mu_{j},e_{m+1}}\right\Vert _{a}^{2}-\mu_{j}\left\vert
z_{\mu_{j},e_{m+1}}\right\vert _{b,2}^{2})\\
&  \qquad-\frac{1}{2^{\ast}}\left\vert t_{\mu_{j},e_{m+1}}e_{m+1}+z_{\mu
_{j},e_{m+1}}\right\vert _{c,2^{\ast}}^{2^{\ast}}\\
&  \leq\frac{t_{\mu_{j},e_{m+1}}^{2}}{2}(\lambda_{m+1}-\mu_{j})-A_{0}%
\frac{t_{\mu_{j},e_{m+1}}^{2^{\ast}}}{2^{\ast}}\left\vert e_{m+1}\right\vert
_{c,2^{\ast}}^{2^{\ast}}.
\end{align*}
It follows that $(t_{\mu_{j},e_{m+1}})$ is bounded and, hence, that
\[
0<\ell_{\mu_{j}}\leq\frac{t_{\mu_{j},e_{m+1}}^{2}}{2}(\lambda_{m+1}-\mu
_{j})=o(1).
\]
This proves that $\ell_{\mu_{j}}\rightarrow0$ as\ $\mu_{j}\rightarrow
\lambda_{m+1}$ from the left.

(d):$\quad$If $\lambda\in T_{m},$ $\lambda\leq\lambda_{m,\ast}^{a,b,c},$ and
$w\in\Sigma_{m}$ were such that $\ell_{\lambda}=I_{\lambda}(w)$ then for
$\mu\in(\lambda,\lambda_{m,\ast}^{a,b,c}(\Theta))$ we would have that
\[
\kappa^{a,c}=\ell_{\mu}\leq I_{\mu}(w)<I_{\lambda}(w)=\ell_{\lambda},
\]
contradicting (a). It follows that $\ell_{\lambda}$ is not attained if
$\lambda\in\lbrack\lambda_{m}^{a,b},\lambda_{m,\ast}^{a,b,c}).$ Statement (b)
implies that $\ell_{\lambda}$ is attained if $\lambda\in(\lambda_{m,\ast
}^{a,b,c},\lambda_{m+1}^{a,b}).$
\end{proof}

Recall that a $(PS)_{\tau}$-sequence for $J_{\lambda}$ is a sequence $(u_{k})$
in $H_{0}^{1}(\Theta)$ such that $J_{\lambda}(u_{k})\rightarrow\tau$ and
$J_{\lambda}^{\prime}(u_{k})\rightarrow0$ in $H^{-1}(\Theta).$ The value
$\ell_{\lambda}$ is characterized as follows.

\begin{corollary}
\label{cor:ps}$\ell_{\lambda}=\inf\{\tau>0:$ there exists a $(PS)_{\tau}%
$-sequence for $J_{\lambda}\}.$
\end{corollary}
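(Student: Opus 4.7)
Write $\widetilde{\ell}_{\lambda}$ for the right-hand side of the asserted identity. The plan is to prove $\widetilde{\ell}_{\lambda}\leq\ell_{\lambda}$ and $\widetilde{\ell}_{\lambda}\geq\ell_{\lambda}$ separately, reusing the ingredients already assembled in the proof of Theorem \ref{thm:bifurcation}.

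For the inequality $\widetilde{\ell}_{\lambda}\leq\ell_{\lambda}$, I would simply exhibit a $(PS)_{\ell_{\lambda}}$-sequence for $J_{\lambda}$. This is the construction from the proof of part (b): since $I_{\lambda}\in\mathcal{C}^{1}(\Sigma_{m},\mathbb{R})$ and $\ell_{\lambda}=\inf_{\Sigma_{m}}I_{\lambda}$, Ekeland's variational principle provides a Palais--Smale sequence $(w_{k})\subset\Sigma_{m}$ for $I_{\lambda}$ with $I_{\lambda}(w_{k})\to\ell_{\lambda}$, and then $u_{k}:=t_{\lambda,w_{k}}w_{k}+z_{\lambda,w_{k}}$ is a $(PS)_{\ell_{\lambda}}$-sequence for $J_{\lambda}$ on $H_{0}^{1}(\Theta)$ by Corollary 2.10 of \cite{sw1} (or Corollary 33 of \cite{sw2}). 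Part (a) of Theorem \ref{thm:bifurcation} guarantees $\ell_{\lambda}>0$, so this sequence witnesses $\widetilde{\ell}_{\lambda}\leq\ell_{\lambda}$. Note that this step works uniformly, including the borderline case $\ell_{\lambda}=\kappa^{a,c}$, precisely because the argument does not rely on any compactness below $\kappa^{a,c}$.

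For the reverse inequality, let $(u_{k})$ be any $(PS)_{\tau}$-sequence for $J_{\lambda}$ with $\tau>0$. If $\tau\geq\kappa^{a,c}$, then Theorem \ref{thm:bifurcation}(a) yields $\tau\geq\kappa^{a,c}\geq\ell_{\lambda}$ at once. Otherwise $\tau<\kappa^{a,c}$, and Corollary 3.2 of \cite{cf} gives a subsequence $u_{k_{j}}\to u$ in $H_{0}^{1}(\Theta)$, so that $u$ is a critical point of $J_{\lambda}$ with $J_{\lambda}(u)=\tau$. Since $\tau>0=J_{\lambda}(0)$, the limit $u$ is nontrivial, hence lies in $\mathcal{N}_{\lambda}$ (as recalled in Section \ref{sec:anisotropic}), and therefore $\tau=J_{\lambda}(u)\geq\inf_{\mathcal{N}_{\lambda}}J_{\lambda}=\ell_{\lambda}$. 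Taking the infimum over all admissible $\tau$ gives $\widetilde{\ell}_{\lambda}\geq\ell_{\lambda}$.

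I do not expect any serious obstacle here: the nontrivial inputs are already in place (the Nehari-type characterization of $\ell_{\lambda}$, the PS-sequence lift from $\Sigma_{m}$ to $H_{0}^{1}(\Theta)$, and the subcritical compactness statement of \cite{cf}). The only subtle point is ensuring that the candidate $(PS)_{\ell_{\lambda}}$-sequence produced by Ekeland actually has positive limit energy so that it is counted by $\widetilde{\ell}_{\lambda}$, which is why invoking part (a) of Theorem \ref{thm:bifurcation} is essential.
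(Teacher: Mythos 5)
Your proof is correct and follows essentially the same route as the paper: the $(PS)_{\ell_{\lambda}}$-sequence from the Ekeland argument in part (b) of Theorem \ref{thm:bifurcation} gives one inequality, and Corollary 3.2 of \cite{cf} plus the fact that nontrivial critical points lie in $\mathcal{N}_{\lambda}$ gives the other. The only cosmetic difference is that the paper phrases the second step as a contradiction argument (assuming $\tau<\ell_{\lambda}$, which automatically forces $\tau<\kappa^{a,c}$), whereas you argue directly and split off the case $\tau\geq\kappa^{a,c}$.
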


\begin{proof}
The argument given in the proof of statement (b) of Theorem
\ref{thm:bifurcation} shows that there exists a $(PS)_{\ell_{\lambda}}%
$-sequence for $J_{\lambda}.$ To prove that $\ell_{\lambda}$ is the smallest
positive number with this property, we argue by contradiction. Assume that
$\tau<\ell_{\lambda}$ and that there exists a $(PS)_{\tau}$-sequence for
$J_{\lambda}$. Then $\tau<\kappa^{a,c}$ and Corollary 3.2 in \cite{cf} asserts
that $(u_{k})$ contains a subsequence which converges to a critical point $u$
of $J_{\lambda}$ with $J_{\lambda}(u)=\tau$. If $\tau\neq0$ then
$u\in\mathcal{N}_{\lambda}$ and, hence, $\ell_{\lambda}\leq\tau$. This is a contradiction.
\end{proof}

For the classical Brezis-Nirenberg problem (\ref{bn}) (where $a=b=c\equiv1$)
with $n\geq4,$ it is known that $\lambda_{0,\ast}^{a,b,c}=0$ and
$\lambda_{m,\ast}^{a,b,c}=\lambda_{m},$ the $m$-th Dirichlet eigenvalue of
$-\Delta$ in $\Theta,$ for all $m\in\mathbb{N}$. Moreover, $\ell_{\lambda
}=\frac{1}{n}S^{\frac{n}{2}}=\kappa^{a,c}$ for every $\lambda\leq0,$ but
$\ell_{\lambda}<\frac{1}{n}S^{\frac{n}{2}}$ for every $\lambda>0$ if $n\geq5,$
see \cite{bn, gr, sww}.

As we shall see below, this is not true in general: For the problem
(\ref{prob2}) in Section \ref{sec:proof} which arises from the supercritical
one, one has that $\lambda_{0,\ast}^{a,b,c}>0$ in most cases, see Propositions
\ref{prop:k>1} and \ref{prop:k=1}. A special feature of that problem is that
the value $\kappa^{a,c}$ is attained on the boundary of $\Theta.$ A different
situation was considered by Egnell \cite{e} and Hadiji and Yazidi \cite{hy}.
They showed for example that, if $a$ attains its minimum at an interior point
$x_{0}$ of $\Theta,$ $b=1=c,$ and $a$ is flat enough around $x_{0},$ then
$\lambda_{0,\ast}^{a,b,c}=0$ for $n\geq4$, as in the classical
Brezis-Nirenberg case.

We do not know whether, in general, $\lambda_{0,\ast}^{a,b,c}\geq0$. But this
will be true in the special case we are interested in, see Proposition
\ref{prop:nogroundstate}. The proof uses a nonexistence result for the
supercritical problem, which we discuss in the following section.

\section{Nonexistence of solutions to a supercritical problem}

\label{sec:nonexistence}Let $\Theta$ be a bounded smooth domain in
$\mathbb{R}^{N-k}$ with $\overline{\Theta}\subset\left(  0,\infty\right)
\times\mathbb{R}^{N-k-1}$ and $0\leq k\leq N-3$. Set
\[
\Omega:=\{(y,z)\in\mathbb{R}^{k+1}\times\mathbb{R}^{N-k-1}:\left(  \left\vert
y\right\vert ,z\right)  \in\Theta\}
\]
and consider the problem
\begin{equation}
\left\{
\begin{array}
[c]{ll}%
-\Delta u=\lambda u+\left\vert u\right\vert ^{p-2}u & \text{in }\Omega,\\
\hspace{0.6cm}u=0 & \text{on }\partial\Omega.
\end{array}
\right.  \label{prob_p}%
\end{equation}

Passaseo \cite{pa1,pa2} showed that, if $\Theta$ is a ball, problem
(\ref{prob_p}) does not have a nontrivial solution for $\lambda=0$ and
$p\geq2_{N,k}^{\ast}:=\frac{2(N-k)}{N-k-2}$. In \cite{cfp} it is shown\ that
this is also true for doubly starshaped domains.

\begin{definition}
\label{def:**}$\Theta$ is doubly starshaped if there exist two numbers
$0<t_{0}<t_{1}$ such that $t\in(t_{0},t_{1})$ for every $(t,z)\in$ $\Theta$
and $\Theta$ is strictly starshaped with respect to $\xi_{0}:=(t_{0},0)$ and
to $\xi_{1}:=(t_{1},0)$, i.e.
\[
\left\langle x-\xi_{i},\nu_{\Theta}(x)\right\rangle >0\qquad\forall
x\in\partial\Theta\smallsetminus\left\{  \xi_{i}\right\}  ,\text{ \ }i=0,1,
\]
where $\nu_{\Theta}$ is the outward pointing unit normal to $\partial\Theta.$
\end{definition}

We denote the first Dirichlet eigenvalue of $-\Delta$ in $\Omega$ by
$\lambda_{1}(\Omega).$

\begin{theorem}
\label{thm:nonexistence}If $\Theta$ is doubly starshaped, $p\geq2_{N,k}^{\ast
}$ and%
\[
\lambda\leq\frac{2(p-2_{N,k}^{\ast})}{2_{N,k}^{\ast}(p-2)}\lambda_{1}%
(\Omega),
\]
then problem \emph{(\ref{prob_p})} does not have a nontrivial solution.
\end{theorem}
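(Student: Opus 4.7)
The plan is to establish a Pohozaev-type identity on $\Omega$ adapted to the rotational symmetry and to the supercritical exponent $2_{N,k}^{*}$. For a point $\xi = (s, z_{*})$ I introduce the vector field $F_\xi(y,z) := ((1 - s/|y|)y,\, z - z_{*})$, which is well defined on $\overline\Omega$ since $|y| \geq t_0 > 0$. A direct calculation gives $\text{div}(F_\xi) = N - ks/|y|$, $\Delta\,\text{div}(F_\xi) = k(k-2)s/|y|^3$, and
\[
DF_\xi(\nabla u, \nabla u) = |\nabla u|^2 - (s/|y|)\bigl(|\nabla_y u|^2 - (\hat y \cdot \nabla_y u)^2\bigr).
\]
At a boundary point $(y, z) \in \partial\Omega$ with $(|y|, z) \in \partial\Theta$, one checks $F_\xi \cdot \nu_\Omega = ((|y|, z) - \xi) \cdot \nu_\Theta$; therefore, by the doubly starshaped hypothesis, both $F_{\xi_0} \cdot \nu_\Omega \geq 0$ and $F_{\xi_1} \cdot \nu_\Omega \geq 0$ on $\partial\Omega$, for $\xi_0 = (t_0, 0)$ and $\xi_1 = (t_1, 0)$.

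The core step is to multiply the equation by the combination $F \cdot \nabla u + \frac{\text{div}(F)}{p}\,u$ (with $F = F_{\xi_0}$, say) and integrate by parts. The second piece of the multiplier is chosen precisely to cancel the $|u|^p$ contribution, yielding the identity
\[
\int_\Omega DF(\nabla u, \nabla u) - \tfrac{p-2}{2p}\int_\Omega \text{div}(F)|\nabla u|^2 + \lambda\tfrac{p-2}{2p}\int_\Omega \text{div}(F)\,u^2 - \tfrac{1}{2p}\int_\Omega u^2\,\Delta\,\text{div}(F) = \tfrac{1}{2}\int_{\partial\Omega}(F \cdot \nu)(\partial_\nu u)^2 \geq 0.
\]
Setting $\sigma := (p-2)/(2p)$ and $\eta := t_0/|y| \in (t_0/t_1, 1]$, the hypothesis $p \geq 2_{N,k}^{*}$ is equivalent to $\sigma(N-k) \geq 1$, so the coefficient $1 - \sigma(N - k\eta)$ multiplying $|\nabla u|^2$ is pointwise $\leq 0$, strictly so on the full-measure set $\{|y| > t_0\}$.

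The conclusion is extracted by combining this identity with the Poincar\'e inequality $\int_\Omega|\nabla u|^2 \geq \lambda_1(\Omega)\int_\Omega u^2$ and, where needed, with the Nehari identity $\int_\Omega|\nabla u|^2 = \lambda\int_\Omega u^2 + \int_\Omega|u|^p$, to produce a linear lower bound on $\lambda$ of the form $\lambda \geq \tfrac{(N-k)(p-2) - 2p}{(N-k)(p-2)}\lambda_1(\Omega) = \tfrac{2(p - 2_{N,k}^{*})}{2_{N,k}^{*}(p-2)}\lambda_1(\Omega)$ for any nontrivial solution, which is the contrapositive of the stated theorem. The main obstacle is a careful sign analysis: the coefficients are position-dependent through $\eta$, and the sign of $\Delta\,\text{div}(F_{\xi_0}) = k(k-2)t_0/|y|^3$ depends on the value of $k$, being negative for $k=1$, zero for $k=2$, and positive for $k \geq 3$. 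More delicately, one must recover the precise denominator $(N-k)(p-2)$ in the threshold (reflecting the effective dimension $N - k$ of the reduced problem on $\Theta$) rather than the weaker $N(p-2)$ that arises from naive pointwise bounds $N - k\eta \leq N$; this sharpness should follow by exploiting the reverse bound $N - k\eta \geq N - k$ on the $u^2$ integral and, if needed, by combining with the analogous identity obtained from the vector field $F_{\xi_1}$.
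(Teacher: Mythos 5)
Your overall strategy (a Pucci--Serrin/Pohozaev identity with a symmetry-adapted vector field, combined with the boundary positivity coming from double starshapedness and the Poincar\'e inequality) is the same as the paper's, and your computations of $\mathrm{div}\,F_\xi$, of $DF_\xi(\nabla u,\nabla u)$, of $\Delta\,\mathrm{div}\,F_\xi$ and of the boundary pairing are correct. The genuine gap is the choice of vector field, and it is exactly the difficulty you flag at the end but do not resolve. The paper uses $\chi_\tau(y,z)=(\varphi(|y|)y,z)$ with $\varphi(t)=\frac{1}{k+1}\bigl[1-(\tau/t)^{k+1}\bigr]$, i.e.\ the solution of $t\varphi'(t)+(k+1)\varphi(t)=1$, $\varphi(\tau)=0$; this makes $\mathrm{div}\,\chi_\tau\equiv N-k$ \emph{identically} while keeping $\langle \mathrm{d}\chi_\tau[\xi],\xi\rangle\le|\xi|^2$, so the Pucci--Serrin identity (taken with no $u$-multiplier, the $|u|^p$ term being eliminated afterwards via the Nehari identity $\int|\nabla u|^2=\lambda\int u^2+\int|u|^p$) directly produces the effective dimension $N-k$ and hence the sharp constant $\frac{2(p-2_{N,k}^{\ast})}{2_{N,k}^{\ast}(p-2)}$. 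Your field $F_{\xi_0}$ corresponds instead to $\varphi(t)=1-t_0/t$, whose divergence $N-kt_0/|y|$ ranges over $[N-k,\,N-kt_0/t_1]$ and is constant only when $k=0$.

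This non-constancy cannot be repaired by the devices you propose. In the regime $p>2_{N,k}^{\ast}$ the relevant values of $\lambda$ are positive, so the term $\lambda\sigma\int\mathrm{div}(F)\,u^2$ requires an \emph{upper} bound on $\mathrm{div}\,F$, while the term $-\sigma\int\mathrm{div}(F)|\nabla u|^2$ requires a \emph{lower} bound; using $N-k\eta\le N-kt_0/t_1$ for the former and $N-k\eta\ge N-k$ for the latter yields only $\lambda\ge\frac{\sigma(N-k)-1}{\sigma(N-kt_0/t_1)}\lambda_1(\Omega)$ with $\sigma=\frac{p-2}{2p}$, which is strictly weaker than the stated threshold $\frac{\sigma(N-k)-1}{\sigma(N-k)}\lambda_1(\Omega)$. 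Switching to $F_{\xi_1}$ reverses the inequalities on $\mathrm{div}\,F$ but then $1-\sigma\,\mathrm{div}\,F_{\xi_1}$ can become positive, destroying the sign of the gradient term; and since $F_\xi$ depends affinely on $\xi$, every convex combination of $F_{\xi_0}$ and $F_{\xi_1}$ is again some $F_{\xi_\theta}$ with non-constant divergence, so no averaging restores the sharp constant. Moreover, for $k=1$ your extra term $-\frac{1}{2p}\int u^2\,\Delta\,\mathrm{div}\,F_{\xi_0}=+\frac{t_0}{2p}\int u^2/|y|^3$ has the unfavorable sign, so even a non-sharp conclusion is not reached in that case. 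The one thing your field buys is that $\langle F_{\xi_0},\nu_\Omega\rangle>0$ follows from starshapedness with respect to $\xi_0$ alone, whereas for $\chi_{t_0}$ the boundary positivity is exactly where both centers $\xi_0,\xi_1$ enter (Proposition \ref{prop:**}, taken from \cite{cfp}). The missing idea is thus the normalization of the divergence through the ODE defining $\varphi$.
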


We point out that the geometric assumption on $\Theta$ cannot be dropped.
Existence of multiple solutions to problem (\ref{prob_p}) for $\lambda=0$ and
$p=2_{N,k}^{\ast}$ in some domains where $\Theta$ is not doubly starshaped has
been established in \cite{cf,kp, wy}.

The proof of Theorem \ref{thm:nonexistence} follows the ideas introduced in
\cite{cfp,pa1,pa2}. Fix $\tau\in(0,\infty)$ and let $\varphi$ be the solution
to the problem%
\[
\left\{
\begin{array}
[c]{ll}%
\varphi^{\prime}(t)t+(k+1)\varphi(t)=1, & t\in(0,\infty),\\
\varphi(\tau)=0. &
\end{array}
\right.
\]
Explicitly, $\varphi(t)=\frac{1}{k+1}\left[  1-(\frac{\tau}{t})^{k+1}\right]
.$ Note that $\varphi$ is strictly increasing in $(0,\infty).$ For $y\neq0$ we
define%
\begin{equation}
\chi_{\tau}(y,z):=(\varphi(\left\vert y\right\vert )y,z). \label{vf}%
\end{equation}

\begin{lemma}
\label{lem:vf}The vector field $\chi_{\tau}$ has the following properties:

\begin{enumerate}
\item[(a)] \emph{div}$\chi_{\tau}=N-k,$

\item[(b)] $\left\langle \mathrm{d}\chi_{\tau}(y,z)\left[  \xi\right]
,\xi\right\rangle \leq\max\{1-k\varphi(\left\vert y\right\vert ),1\}\left\vert
\xi\right\vert ^{2}$ \ for every $y\in\mathbb{R}^{k+1}\smallsetminus\{0\},$
$z\in\mathbb{R}^{N-k-1},$ $\xi\in\mathbb{R}^{N}.$
\end{enumerate}
\end{lemma}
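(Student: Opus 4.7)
The plan is that both parts are essentially direct calculations once the block structure of $\mathrm{d}\chi_\tau$ is written down, so I will organize the proof around that structure.

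First I would record that $\chi_\tau = (\chi_\tau^y, z)$ where $\chi_\tau^y(y,z) := \varphi(|y|)y$ depends only on $y$, so $\mathrm{d}\chi_\tau$ is block-diagonal: the lower-right $(N-k-1)\times(N-k-1)$ block is the identity (contributing $|\zeta|^2$ to the quadratic form and $N-k-1$ to the divergence), while the upper-left $(k+1)\times(k+1)$ block is the symmetric matrix
\[
M(y) := \varphi(|y|)\,I_{k+1} + \frac{\varphi'(|y|)}{|y|}\, y\otimes y.
\]
For (a), the trace of $M(y)$ is $(k+1)\varphi(|y|) + \varphi'(|y|)|y|$, which equals $1$ by the defining ODE $\varphi'(t)t + (k+1)\varphi(t) = 1$. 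Adding the identity block gives $\operatorname{div}\chi_\tau = 1 + (N-k-1) = N-k$.

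For (b), writing $\xi = (\eta,\zeta)\in\mathbb{R}^{k+1}\times\mathbb{R}^{N-k-1}$, I get
\[
\langle \mathrm{d}\chi_\tau(y,z)[\xi],\xi\rangle = \varphi(|y|)|\eta|^2 + \frac{\varphi'(|y|)}{|y|}(y\cdot\eta)^2 + |\zeta|^2.
\]
The key observation is that $\varphi'(t) = \tau^{k+1}/t^{k+2} > 0$, so the coefficient $\varphi'(|y|)/|y|$ is nonnegative, which legitimizes applying Cauchy--Schwarz $(y\cdot\eta)^2 \le |y|^2|\eta|^2$ as an upper bound. Together with the ODE identity $\varphi(|y|) + \varphi'(|y|)|y| = 1 - k\varphi(|y|)$, this yields
\[
\langle \mathrm{d}\chi_\tau(y,z)[\xi],\xi\rangle \le (1 - k\varphi(|y|))|\eta|^2 + |\zeta|^2.
\]

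The only subtlety, which is what produces the $\max$ in the statement, is the splitting into cases according to the sign of $\varphi(|y|)$: for $|y|\le\tau$ we have $1-k\varphi(|y|)\ge 1$, so both coefficients can be bounded above by $1-k\varphi(|y|)$, giving the bound $(1-k\varphi(|y|))|\xi|^2$; for $|y|>\tau$ we have $1-k\varphi(|y|)<1$, so both coefficients are bounded above by $1$, giving $|\xi|^2$. Combining the two cases yields the claimed estimate with $\max\{1-k\varphi(|y|),\,1\}|\xi|^2$. There is no real obstacle here beyond correctly invoking the ODE to recognize $\varphi + \varphi'|y|$ as $1 - k\varphi(|y|)$ and handling the two sign regimes separately.
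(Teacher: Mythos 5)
Your computation is correct: the block-diagonal form of $\mathrm{d}\chi_{\tau}$, the use of the ODE $\varphi'(t)t+(k+1)\varphi(t)=1$ to evaluate the trace and to identify $\varphi+\varphi'|y|$ with $1-k\varphi(|y|)$, the positivity of $\varphi'$ justifying Cauchy--Schwarz, and the sign split at $|y|=\tau$ are all exactly what is needed. The paper itself gives no proof here, only a reference to \cite[Lemma 2.3]{pa2} and \cite[Lemma 4.2]{cfp}, and your argument is a correct self-contained version of that standard computation.
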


\begin{proof}
See \cite[Lemma 2.3]{pa2} or \cite[Lemma 4.2]{cfp}.
\end{proof}

\begin{proposition}
\label{prop:nonexistence}Assume there exists $\tau\in(0,\infty)$ such that
$\left\vert y\right\vert \in(\tau,\infty)$ for every $(y,z)\in\Omega$ and
$\left\langle \chi_{\tau},\nu_{\Omega}\right\rangle >0$ a.e. on $\partial
\Omega.$ If $p\geq2_{N,k}^{\ast}$ and%
\[
\lambda\leq\frac{2(p-2_{N,k}^{\ast})}{2_{N,k}^{\ast}(p-2)}\lambda_{1}(\Omega),
\]
then problem \emph{(\ref{prob_p})} does not have a nontrivial solution.
\end{proposition}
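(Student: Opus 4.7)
The plan is a Pohozaev--Pucci--Serrin identity built from the vector field $\chi_\tau$, combined with the standard energy identity and the Poincar\'e inequality. Suppose for contradiction that $u\not\equiv 0$ is a solution of \eqref{prob_p}, and set $A:=\int_\Omega|\nabla u|^2$, $B:=\int_\Omega u^2$, $C:=\int_\Omega|u|^p$. Testing with $u$ yields $A=\lambda B+C$. Next, I would test the equation with $\langle\chi_\tau,\nabla u\rangle$ and integrate by parts; the vector field is smooth on $\overline\Omega$ since $|y|>\tau>0$ throughout. Using $u=0$ on $\partial\Omega$ (so that $\nabla u=(\partial_\nu u)\,\nu_\Omega$ there) and Lemma \ref{lem:vf}(a) to substitute $\text{div}\,\chi_\tau=N-k$, all lower-order boundary terms drop out, and after substituting $A=\lambda B+C$ one is left with
\[
\int_\Omega\langle d\chi_\tau\nabla u,\nabla u\rangle \;=\;\frac{(N-k)(p-2)}{2p}\,C\;+\;\tfrac{1}{2}\int_{\partial\Omega}\langle\chi_\tau,\nu_\Omega\rangle(\partial_\nu u)^2.
\]

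Since $\varphi(|y|)\geq 0$ on $\overline\Omega$, Lemma \ref{lem:vf}(b) specializes to the pointwise bound $\langle d\chi_\tau\nabla u,\nabla u\rangle\leq|\nabla u|^2$, so the left-hand side above is at most $A=\lambda B+C$. Rearranging and using the algebraic identity $(N-k)(p-2)-2p=(N-k-2)(p-2^{\ast}_{N,k})$ yields
\[
\lambda B \;\geq\; \frac{(N-k-2)(p-2^{\ast}_{N,k})}{2p}\,C\;+\;\tfrac{1}{2}\int_{\partial\Omega}\langle\chi_\tau,\nu_\Omega\rangle(\partial_\nu u)^2.
\]
Both right-hand terms are nonnegative: the first by the hypothesis $p\geq 2^{\ast}_{N,k}$, the second by the hypothesis $\langle\chi_\tau,\nu_\Omega\rangle>0$ a.e.\ on $\partial\Omega$.

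Finally, the Poincar\'e inequality $A\geq\lambda_1(\Omega)B$, together with $A=\lambda B+C$, gives $C\geq(\lambda_1(\Omega)-\lambda)B$. Substituting into the previous display, discarding the nonnegative boundary term, and dividing by $B>0$ produces
\[
\lambda\;\geq\;\frac{(N-k-2)(p-2^{\ast}_{N,k})}{2p}\bigl(\lambda_1(\Omega)-\lambda\bigr),
\]
which rearranges to $\lambda\geq \frac{2(p-2^{\ast}_{N,k})}{2^{\ast}_{N,k}(p-2)}\lambda_1(\Omega)$, contradicting the hypothesis in the strict case. The equality case is handled separately: it would force equality in Poincar\'e, so $u$ must be a first Dirichlet eigenfunction of $-\Delta$ on $\Omega$; but then the nonlinear equation forces $|u|^{p-2}=\lambda_1(\Omega)-\lambda$ on $\{u\neq 0\}$, which is incompatible with the continuity of $u$ and $u=0$ on $\partial\Omega$. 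The only delicate technical point is justifying the Pohozaev identity in the supercritical range, which is standard here thanks to the smoothness of $\chi_\tau$ on $\overline\Omega$ and elliptic regularity up to the boundary.
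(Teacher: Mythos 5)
Your argument is essentially the paper's: both rest on the Pucci--Serrin identity applied to the field $\chi_\tau$, the pointwise bound $\langle \mathrm{d}\chi_\tau\nabla u,\nabla u\rangle\le|\nabla u|^2$ from Lemma \ref{lem:vf}, the energy identity $A=\lambda B+C$, and the Poincar\'e inequality. Your displayed identity is algebraically equivalent to the one in the paper, and the computation leading to $\lambda\ge\frac{2(p-2^{\ast}_{N,k})}{2^{\ast}_{N,k}(p-2)}\lambda_{1}(\Omega)$ is correct.

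The gap is in your treatment of the equality case, and it occurs in the most important instance of the proposition. When $p=2^{\ast}_{N,k}$ the coefficient $\frac{(N-k-2)(p-2^{\ast}_{N,k})}{2p}$ vanishes, the hypothesis reads $\lambda\le 0$, and your final inequality degenerates to $\lambda\ge 0$; the one case left open is therefore $\lambda=0$, $p=2^{\ast}_{N,k}$, which is Passaseo's theorem and precisely the case invoked later in the proof of Proposition \ref{prop:nogroundstate}. There your claim that equality ``forces equality in Poincar\'e'' fails: the Poincar\'e step enters your chain multiplied by the coefficient $\frac{(N-k-2)(p-2^{\ast}_{N,k})}{2p}=0$, so equality imposes no constraint on $A/B$ and $u$ need not be a first eigenfunction. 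What equality does force is the vanishing of the boundary term $\frac12\int_{\partial\Omega}\langle\chi_\tau,\nu_\Omega\rangle(\partial_\nu u)^2$, hence $\partial_\nu u=0$ a.e.\ on $\partial\Omega$ (since $\langle\chi_\tau,\nu_\Omega\rangle>0$ a.e.), after which one concludes $u\equiv 0$ by extending $u$ by zero and invoking unique continuation; equivalently, one argues that for a nontrivial solution the boundary integral is strictly positive, which is what underlies the strict inequality ``$0<\cdots$'' in the paper's proof. Your eigenfunction argument does work at the threshold value of $\lambda$ when $p>2^{\ast}_{N,k}$, since then the coefficient of $C$ is positive and equality genuinely forces $A=\lambda_{1}(\Omega)B$; but as written your proof does not cover $\lambda=0$, $p=2^{\ast}_{N,k}$.
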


\begin{proof}
The variational identity (4) in Pucci and Serrin's paper \cite{ps} implies
that, if $u\in\mathcal{C}^{2}(\Omega)\cap\mathcal{C}^{1}(\overline{\Omega})$
is a solution of (\ref{prob_p}) and $\chi\in\mathcal{C}^{1}(\overline{\Omega
},\mathbb{R}^{N}),$ then%
\begin{align}
\frac{1}{2}\int_{\partial\Omega}\left\vert \nabla u\right\vert ^{2}%
\left\langle \chi,\nu_{\Omega}\right\rangle d\sigma= &  \int_{\Omega}\left(
\text{div}\chi\right)  \left[  \frac{1}{p}\left\vert u\right\vert ^{p}%
+\frac{\lambda}{2}u^{2}-\frac{1}{2}\left\vert \nabla u\right\vert ^{2}\right]
dx\label{pu-se}\\
&  +\int_{\Omega}\left\langle \mathrm{d}\chi\left[  \nabla u\right]  ,\nabla
u\right\rangle dx,\nonumber
\end{align}
where $\nu_{\Omega}$ is the outward pointing unit normal to $\partial\Omega$
(in the notation of \cite{ps} we have taken $\mathcal{F}(x,u,\nabla
u)=\frac{1}{2}|\nabla u|^{2}-\frac{1}{2}\lambda u^{2}-\frac{1}{p}|u|^{p}$,
$h=\chi$ and $a=0$). Let $\chi:=\chi_{\tau}.$ Then, by Lemma \ref{lem:vf},
\[
\text{div}\chi_{\tau}=N-k.
\]
Moreover, since $1-k\varphi(t)<1$ for $t\in(\tau,\infty),$ and $\left\vert
y\right\vert \in(\tau,\infty)$ for every $(y,z)\in\Omega$, Lemma
\ref{lem:vf}\ yields
\[
\left\langle \mathrm{d}\chi_{\tau}(y,z)\left[  \xi\right]  ,\xi\right\rangle
\leq\left\vert \xi\right\vert ^{2}\qquad\forall(y,z)\in\Omega,\text{ }\xi
\in\mathbb{R}^{N}.
\]
By assumption, $\left\langle \chi_{\tau},\nu_{\Omega}\right\rangle >0$ a.e. on
$\partial\Omega.$ Therefore, if $u$ is a nontrivial solution of (\ref{prob_p})
we have, using \eqref{pu-se}, that%
\begin{align*}
0 &  <(N-k)\left(  \frac{1}{p}-\frac{1}{2}\right)  \int_{\Omega}\left[
\left\vert \nabla u\right\vert ^{2}-\lambda u^{2}\right]  dx+\int_{\Omega
}\left\vert \nabla u\right\vert ^{2}dx\\
&  =(N-k)\left(  \frac{1}{p}-\frac{1}{2}+\frac{1}{N-k}\right)  \int_{\Omega
}\left\vert \nabla u\right\vert ^{2}dx-(N-k)\left(  \frac{1}{p}-\frac{1}%
{2}\right)  \lambda\int_{\Omega}u^{2}dx,
\end{align*}
that is,%
\[
\left(  \frac{1}{2}-\frac{1}{p}\right)  \lambda\int_{\Omega}u^{2}dx>\left(
\frac{1}{2_{N,k}^{\ast}}-\frac{1}{p}\right)  \int_{\Omega}\left\vert \nabla
u\right\vert ^{2}dx.
\]
Therefore, if $p\geq2_{N,k}^{\ast}$ and%
\[
\lambda\leq\frac{2(p-2_{N,k}^{\ast})}{2_{N,k}^{\ast}(p-2)}\inf_{\substack{u\in
H_{0}^{1}(\Omega)\\u\neq0}}\frac{\int_{\Omega}\left\vert \nabla u\right\vert
^{2}dx}{\int_{\Omega}u^{2}dx}=\frac{2(p-2_{N,k}^{\ast})}{2_{N,k}^{\ast}%
(p-2)}\lambda_{1}(\Omega),
\]
problem (\ref{prob_p}) does not have a nontrivial solution in $\Omega,$ as claimed.
\end{proof}

The following result was proved in \cite{cfp}.

\begin{proposition}
\label{prop:**}If $\Theta$ is doubly starshaped then $\Theta\subset\left(
t_{0},\infty\right)  \times\mathbb{R}^{N-k-1}$ and $\left\langle \chi_{t_{0}%
},\nu_{\Theta}\right\rangle >0$ a.e. on $\partial\Theta,$ with $t_{0}$ as in
\emph{Definition \ref{def:**}}.
\end{proposition}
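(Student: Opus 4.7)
The first assertion is immediate from Definition~\ref{def:**}: every $(t,z)\in\Theta$ has $t\in(t_0,t_1)\subset(t_0,\infty)$.

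For the second assertion, my plan is to write $\chi_{t_0}$ on $\overline{\Theta}$ as a convex combination of the two radial vector fields based at $\xi_0$ and $\xi_1$, and then invoke the strict starshapedness with respect to each center. Writing $x=(t,z)$, I will look for a scalar function $\alpha\colon[t_0,t_1]\to[0,1]$ such that
\[
\chi_{t_0}(x)=\alpha(t)(x-\xi_0)+\bigl(1-\alpha(t)\bigr)(x-\xi_1)\qquad\text{on }\overline{\Theta}.
\]
Since $\chi_{t_0}(t,z)=(\varphi(t)t,z)$, the $z$-components match automatically, and matching the first components forces
\[
\alpha(t)=\frac{\varphi(t)t+t_1-t}{t_1-t_0},\qquad \varphi(t)t=\frac{1}{k+1}\Bigl(t-\frac{t_0^{k+1}}{t^{k}}\Bigr).
\]

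The one nontrivial step, and the only place where the specific form of $\varphi$ enters, is to verify the two inequalities
\[
t-t_1\;\leq\;\varphi(t)t\;\leq\;t-t_0\qquad\text{for all }t\in[t_0,t_1],
\]
which are equivalent to $0\leq\alpha(t)\leq 1$. I plan to extract both from the single derivative identity
\[
\bigl(\varphi(t)t\bigr)'-1=\frac{k}{k+1}\Bigl((t_0/t)^{k+1}-1\Bigr)<0\qquad\text{for }t>t_0.
\]
For the upper bound, this shows that $\varphi(t)t-(t-t_0)$ is strictly decreasing with value $0$ at $t_0$; for the lower bound, the same monotonicity combined with the positive value $\varphi(t_1)t_1=(t_1^{k+1}-t_0^{k+1})/((k+1)t_1^{k})$ at $t=t_1$ forces $\varphi(t)t-(t-t_1)>0$ on $[t_0,t_1]$. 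Consequently $\alpha(t_0)=1$, $\alpha(t)\in(0,1)$ on $(t_0,t_1]$, and $\alpha,\,1-\alpha\geq 0$ throughout.

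With the decomposition in hand the conclusion is immediate. For $x=(t,z)\in\partial\Theta\setminus\{\xi_0,\xi_1\}$ strict starshapedness gives $\langle x-\xi_i,\nu_\Theta(x)\rangle>0$ for $i=0,1$, and since $\alpha(t)+\bigl(1-\alpha(t)\bigr)=1$ the two nonnegative coefficients cannot vanish simultaneously, so
\[
\langle\chi_{t_0}(x),\nu_\Theta(x)\rangle=\alpha(t)\langle x-\xi_0,\nu_\Theta\rangle+\bigl(1-\alpha(t)\bigr)\langle x-\xi_1,\nu_\Theta\rangle>0.
\]
Removing the two-point set $\{\xi_0,\xi_1\}$ leaves a full-measure subset of $\partial\Theta$, yielding the a.e.\ positivity. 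I expect the main obstacle to be the lower inequality $\varphi(t)t\geq t-t_1$ at values of $t$ close to $t_0$, where positivity is not visible by inspection and genuinely relies on the monotonicity of $(\varphi(t)t)'$ derived above.
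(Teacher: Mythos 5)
Your proof is correct. Note that the paper does not actually prove this proposition: its ``proof'' consists of the single line ``See the proof of (4.11) in \cite{cfp}'', so you have supplied a complete self-contained argument for a statement the authors outsource. Your mechanism --- writing $\chi_{t_0}(x)=\alpha(t)(x-\xi_0)+(1-\alpha(t))(x-\xi_1)$ and reducing everything to the two inequalities $t-t_1\leq\varphi(t)t\leq t-t_0$ on $[t_0,t_1]$ --- is essentially the standard one behind the cited result, and your verification is sound: $(\varphi(t)t)'-1=\frac{k}{k+1}\bigl((t_0/t)^{k+1}-1\bigr)\leq 0$, the function $\varphi(t)t-(t-t_0)$ vanishes at $t_0$ and is nonincreasing, and $\varphi(t)t-(t-t_1)$ is nonincreasing with positive value $\varphi(t_1)t_1=\frac{t_1^{k+1}-t_0^{k+1}}{(k+1)t_1^{k}}$ at $t_1$, so $\alpha(t)\in(0,1]$ throughout and the conclusion follows from strict starshapedness with respect to each center, off the null set $\{\xi_0,\xi_1\}$. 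One pedantic remark: Section 3 allows $k=0$, where your derivative identity gives $0$ rather than a strictly negative quantity; in that degenerate case $\varphi(t)t\equiv t-t_0$, so $\alpha\equiv 1$, $\chi_{t_0}(x)=x-\xi_0$, and the claim follows from starshapedness with respect to $\xi_0$ alone, so nothing breaks.
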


\begin{proof}
See the proof of (4.11) in \cite{cfp}.
\end{proof}

\begin{proof}
[Proof of Theorem \ref{thm:nonexistence}.]The conclusion follows immediately
from Propositions \ref{prop:nonexistence} and \ref{prop:**}.
\end{proof}

\section{Existence and nonexistence of symmetric ground states to
supercritical problems}

\label{sec:proof}Next, we come back to our original supercritical problem%
\begin{equation}
\left\{
\begin{array}
[c]{ll}%
-\Delta v=\lambda v+\left\vert v\right\vert ^{2_{N,k}^{\ast}-2}v & \text{in
}\Omega,\\
\hspace{0.6cm}v=0 & \text{on }\partial\Omega,
\end{array}
\right.  \tag{$\wp_\lambda$}%
\end{equation}
where
\[
\Omega:=\{(y,z)\in\mathbb{R}^{k+1}\times\mathbb{R}^{N-k-1}:\left(  \left\vert
y\right\vert ,z\right)  \in\Theta\}
\]
for some bounded smooth domain $\Theta$ in $\mathbb{R}^{N-k}$ with
$\overline{\Theta}\subset\left(  0,\infty\right)  \times\mathbb{R}^{N-k-1}$,
$1\leq k\leq N-3$, and $2_{N,k}^{\ast}:=\frac{2(N-k)}{N-k-2}$.

An $O(k+1)$-invariant function $v:\Omega\rightarrow\mathbb{R}$ can be written
as $v(y,z)=u(\left\vert y\right\vert ,z)$ for some function $u:\Theta
\rightarrow\mathbb{R}$. A straightforward computation shows that%
\begin{equation}
\Delta v=\frac{1}{a(x)}\text{div}(a(x)\nabla u)\text{,} \label{laplacian}%
\end{equation}
where $a(x_{1},\ldots,x_{N-k}):=x_{1}^{k}.$ Hence, $v$ is an $O(k+1)$%
-invariant solution of (\ref{prob}) if and only if $u$ solves%
\begin{equation}
\left\{
\begin{array}
[c]{ll}%
-\text{div}(x_{1}^{k}\nabla u)=\lambda x_{1}^{k}u+x_{1}^{k}\left\vert
u\right\vert ^{2^{\ast}{-2}}u & \text{in }\Theta,\\
\hspace{1.7cm}u=0 & \text{on }\partial\Theta,
\end{array}
\right.  \tag{$\wp_\lambda^\#$}\label{prob2}%
\end{equation}
where $2^{\ast}=2_{N,k}^{\ast}$ is the critical exponent in dimension
$n:=N-k=$ dim$(\Theta).$ So this problem is a special case of the problem
treated in section \ref{sec:anisotropic} with $a(x_{1},\ldots,x_{n}%
):=x_{1}^{k}$ and $a=b=c$.

For these functions $a,b,c$ we simplify notation and write $\ell_{\lambda
}^{\left[  k\right]  },$ $\kappa^{\left[  k\right]  },$ $\lambda_{m}^{\left[
k\right]  },$ $\lambda_{m,\ast}^{\left[  k\right]  }$ instead of
$\ell_{\lambda}^{a,b,c},$ $\kappa^{a,c},$ $\lambda_{m}^{a,b},$ $\lambda
_{m,\ast}^{a,b,c}$. Note that%
\[
\kappa^{\left[  k\right]  }=\left(  \min\limits_{x\in\overline{\Theta}}%
x_{1}^{k}\right)  \frac{1}{n}S^{n/2}.
\]

\begin{proposition}
\label{prop:nogroundstate}If $\alpha:=\min_{x\in\overline{\Theta}}x_{1}$ and
$\lambda\leq0,$ then $\ell_{\lambda}^{\left[  k\right]  }=\frac{\alpha^{k}}%
{n}S^{n/2}$ and it is not attained by $J_{\lambda}$ on $\mathcal{N}_{\lambda
}\equiv\mathcal{N}_{\lambda}(\Theta).$
\end{proposition}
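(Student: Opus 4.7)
My goal is to prove $\ell_\lambda^{[k]}=\tfrac{\alpha^k}{n}S^{n/2}$ and that this value is not attained on $\mathcal{N}_\lambda$ when $\lambda\le 0$. Thanks to parts (a) and (b) of Theorem \ref{thm:bifurcation}, it suffices to prove non-attainment: (a) already gives $\ell_\lambda^{[k]}\le\kappa^{[k]}=\tfrac{\alpha^k}{n}S^{n/2}$, and the contrapositive of (b) shows that if $\ell_\lambda^{[k]}$ is not attained then $\ell_\lambda^{[k]}\ge\kappa^{[k]}$, giving the reverse inequality. Note that $\lambda\le 0<\lambda_1^{[k]}$ puts us in the case $m=0$, so $Z_0=\{0\}$ and $\mathcal{N}_\lambda$ is the classical Nehari manifold.

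I would argue non-attainment by contradiction: if $u\in\mathcal{N}_\lambda$ realized $\ell_\lambda^{[k]}$, then $u$ would be a nontrivial critical point of $J_\lambda$, hence a weak solution of $-\text{div}(x_1^k\nabla u)=\lambda x_1^k u+x_1^k|u|^{2^*-2}u$ in $\Theta$ with $u=0$ on $\partial\Theta$. Via the identity (\ref{laplacian}), its lift $v(y,z):=u(|y|,z)$ would then be a nontrivial $O(k+1)$-invariant solution of (\ref{prob}) on $\Omega$ for $\lambda\le 0$ and $p=2_{N,k}^*$. Since the threshold $\tfrac{2(p-2_{N,k}^*)}{2_{N,k}^*(p-2)}\lambda_1(\Omega)$ in Theorem \ref{thm:nonexistence} vanishes at $p=2_{N,k}^*$, the supercritical nonexistence result yields the desired contradiction whenever $\Theta$ is doubly starshaped.

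For general $\Theta$, where Theorem \ref{thm:nonexistence} does not apply directly, I would supplement the argument with the sharp weighted Sobolev inequality $\widetilde S:=\inf_{u\ne 0}\|u\|_a^2/|u|_{c,2^*}^2=\alpha^{2k/n}S$. The upper bound uses Talenti test functions concentrated at a point $\xi\in\overline\Theta$ with $x_1(\xi)=\alpha$, as in the proof of Theorem \ref{thm:bifurcation}(a); the matching lower bound follows from a P.-L. Lions concentration-compactness analysis, since at any possible concentration point $x_0\in\overline\Theta$ the limiting ratio equals $x_1(x_0)^{2k/n}S\ge\alpha^{2k/n}S$. For any $u\in\mathcal{N}_\lambda$, the Nehari identity combined with $\lambda\le 0$ gives $\widetilde S|u|_{c,2^*}^2\le\|u\|_a^2\le|u|_{c,2^*}^{2^*}$, whence $J_\lambda(u)=\tfrac{1}{n}|u|_{c,2^*}^{2^*}\ge\tfrac{1}{n}\widetilde S^{n/2}=\kappa^{[k]}$. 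Equality throughout would force both $\lambda|u|_{b,2}^2=0$ (ruling out $\lambda<0$) and $u$ to extremize $\widetilde S$; but the concentration-compactness analysis shows that $\widetilde S$ cannot be attained on any bounded $\Theta$, producing the contradiction.

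The main technical obstacle is establishing the identity $\widetilde S=\alpha^{2k/n}S$ together with its non-attainment for general (possibly non-doubly-starshaped) $\Theta$. This requires a careful concentration-compactness analysis, complicated by the fact that the weight $x_1^k$ may attain its minimum on $\partial\Theta$ rather than in the interior.
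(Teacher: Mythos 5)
Your reduction in the first paragraph (non-attainment plus Theorem \ref{thm:bifurcation}(a),(b) forces $\ell_\lambda^{[k]}=\kappa^{[k]}$) is valid, and your second paragraph is essentially the paper's nonexistence machinery (Theorem \ref{thm:nonexistence}, i.e.\ the Pucci--Serrin identity at the critical exponent $p=2^*_{N,k}$, where the threshold vanishes). But that only covers doubly starshaped $\Theta$, and the proposition is stated for arbitrary $\Theta$ with $\overline\Theta\subset(0,\infty)\times\mathbb{R}^{n-1}$; indeed the paper emphasizes that for non--doubly-starshaped $\Theta$ nontrivial solutions at $\lambda=0$ \emph{do} exist, so you cannot hope to rule out all nontrivial solutions in general — you must instead show that none of them lies at the level $\kappa^{[k]}$ or below.

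The gap is in your third paragraph. Concentration--compactness gives a dichotomy for a minimizing sequence of $\widetilde S$: either it concentrates, in which case your computation $\sum x_1(x_i)^k\nu_i^{2/2^*}\ge\alpha^{2k/n}(\sum x_1(x_i)^k\nu_i)^{2/2^*}$ does yield $\widetilde S\ge\alpha^{2k/n}S$; or the weak limit is a genuine minimizer. In the second alternative you obtain exactly the object you are trying to exclude, and concentration--compactness says nothing further: "$\widetilde S$ cannot be attained on any bounded $\Theta$" does \emph{not} follow from a concentration--compactness analysis. Non-attainment of such constants on bounded domains is usually deduced either from a Pohozaev/Pucci--Serrin identity (which here needs the doubly-starshaped hypothesis you are trying to avoid) or from first knowing that the constant is domain-independent — which is precisely the identity $\widetilde S=\alpha^{2k/n}S$ you are in the middle of proving, so your argument is circular at this point. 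The paper's way out, which you are missing, is a domain-comparison: approximate $\ell_0$ by $\varphi\in\mathcal{C}^\infty_c(\Theta)\cap\mathcal{N}_0(\Theta)$, enclose $\mathrm{supp}(\varphi)$ in a large ball $B$ tangent to the hyperplane $x_1=\alpha$, so that $\min_{\overline B}x_1=\alpha$ and hence $\kappa^{[k]}(B)=\kappa^{[k]}(\Theta)$; since $B$ \emph{is} doubly starshaped, Theorem \ref{thm:nonexistence} plus Theorem \ref{thm:bifurcation}(b) applied on $B$ give $\inf_{\mathcal{N}_0(B)}J_0=\kappa^{[k]}$, contradicting $J_0(\varphi)<\kappa^{[k]}$. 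Once the value $\frac{\alpha^k}{n}S^{n/2}$ is known to depend only on $\alpha$ and not on $\Theta$, non-attainment follows by extending a putative minimizer by zero to a strictly larger domain with the same $\alpha$ and invoking unique continuation. Without this comparison step your proof does not close for general $\Theta$.
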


\begin{proof}
By Theorem \ref{thm:bifurcation} it is enough to show this for $\lambda=0.$
Arguing by contradiction, assume that $\ell_{0}^{\left[  k\right]  }%
<\frac{\alpha^{k}}{n}S^{n/2}.$ Then there exists $\varphi\in\mathcal{C}%
_{c}^{\infty}(\Theta)\cap\mathcal{N}_{0}(\Theta)$\ such that
\[
J_{0}(\varphi)<\frac{\alpha^{k}}{n}S^{n/2}=\kappa^{\left[  k\right]  }.
\]
Since supp$(\varphi)$ is a compact subset of $\left(  \alpha,\infty\right)
\times\mathbb{R}^{n-1}$, there exists a $\varrho\in\left(  \alpha
,\infty\right)  $ such that supp$(\varphi)\subset B:=\left\{  x\in
\mathbb{R}^{n}:(x_{1}-\varrho)^{2}+x_{2}^{2}+\cdots+x_{n}^{2}<(\alpha
-\varrho)^{2}\right\}  .$ Hence, $\varphi\in\mathcal{N}_{0}(B).$ Theorem
\ref{thm:nonexistence} and the discussion given at the beginning of this
section imply that problem%
\[
-\text{div}(x_{1}^{k}\nabla u)=x_{1}^{k}\left\vert u\right\vert ^{{2}^{\ast
}-2}u\quad\text{in}\ B,\qquad u=0\quad\text{on}\ \partial B
\]
does not have a nontrivial solution. So, by Theorem \ref{thm:bifurcation},
$\inf_{\mathcal{N}_{0}(B)}J_{0}=\kappa^{\left[  k\right]  }=\frac{\alpha^{k}%
}{n}S^{n/2}.$ But%
\[
\inf_{\mathcal{N}_{0}(B)}J_{0}\leq J_{0}(\varphi)<\frac{\alpha^{k}}{n}%
S^{n/2}.
\]
This is a contradiction. We conclude that $\ell_{0}^{\left[  k\right]  }%
=\frac{\alpha^{k}}{n}S^{n/2}.$

Since this value is the same for every $\Theta$ such that $\alpha:=\min
_{x\in\overline{\Theta}}x_{1},$ a standard argument shows that $\ell
_{0}^{\left[  k\right]  }$ is not attained by $J_{0}$ on $\mathcal{N}%
_{0}\equiv\mathcal{N}_{0}(\Theta).$
\end{proof}

Set%
\[
\alpha:=\min\limits_{x\in\overline{\Theta}}x_{1}\text{,\qquad}\beta
:=\max\limits_{x\in\overline{\Theta}}x_{1}.
\]

\begin{lemma}
\label{lem:f}For every positive function $f\in C^{2}[\alpha,\beta]$ which
satisfies%
\begin{equation}
\alpha^{k}f^{2}(\alpha)\leq t^{k}f^{2}(t)\quad\text{and}\quad t^{k}f^{2^{\ast
}}(t)\leq\alpha^{k}f^{2^{\ast}}(\alpha)\qquad\forall t\in\lbrack\alpha,\beta],
\label{f}%
\end{equation}
we have that%
\[
\lambda_{0,\ast}^{\left[  k\right]  }\geq\min_{t\in\lbrack\alpha,\beta]}%
\frac{-\left(  t^{k}f^{\prime}(t)\right)  ^{\prime}}{t^{k}f(t)}.
\]

\end{lemma}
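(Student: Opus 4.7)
The plan is to establish that $\ell_{\lambda}^{[k]} \geq \kappa^{[k]} = \tfrac{\alpha^{k}}{n}S^{n/2}$ for every $\lambda \leq \mu := \min_{t\in[\alpha,\beta]}\frac{-(t^{k}f'(t))'}{t^{k}f(t)}$ with $\lambda \in T_{0}$; by the definition of $\lambda_{0,\ast}^{[k]}$ this will give the conclusion. For $\lambda < \lambda_{1}^{[k]}$ the Nehari-manifold characterization from Section \ref{sec:anisotropic} reduces to
\[
\ell_{\lambda}^{[k]} = \frac{1}{n}\inf_{u\in H_{0}^{1}(\Theta)\setminus\{0\}}\left(\frac{\int_{\Theta}x_{1}^{k}(|\nabla u|^{2}-\lambda u^{2})}{\left(\int_{\Theta}x_{1}^{k}|u|^{2^{\ast}}\right)^{2/2^{\ast}}}\right)^{n/2},
\]
so it is enough to prove the weighted Sobolev inequality
\[
\int_{\Theta}x_{1}^{k}\bigl(|\nabla u|^{2}-\lambda u^{2}\bigr) \;\geq\; \alpha^{2k/n}\,S\left(\int_{\Theta}x_{1}^{k}|u|^{2^{\ast}}\right)^{2/2^{\ast}}
\]
for every $u \in H_{0}^{1}(\Theta)$ and every $\lambda \leq \mu$.

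The device I would use is the substitution $u = f(x_{1})w$, which is admissible since $f > 0$ on $[\alpha,\beta]$, and gives $w \in H_{0}^{1}(\Theta)$. Expanding and multiplying by $x_{1}^{k}$ yields
\[
x_{1}^{k}|\nabla u|^{2} = x_{1}^{k}f^{2}|\nabla w|^{2} + x_{1}^{k}(f'(x_{1}))^{2}w^{2} + x_{1}^{k}f(x_{1})f'(x_{1})\,\partial_{x_{1}}(w^{2}).
\]
Integrating the last term by parts in $x_{1}$ (boundary terms vanish because $w$ vanishes on $\partial\Theta$) and using $(x_{1}^{k}ff')' = (x_{1}^{k}f')'f + x_{1}^{k}(f')^{2}$, I obtain
\[
\int_{\Theta}x_{1}^{k}|\nabla u|^{2} = \int_{\Theta}x_{1}^{k}f^{2}|\nabla w|^{2} - \int_{\Theta}\bigl(x_{1}^{k}f'(x_{1})\bigr)'f(x_{1})\,w^{2}.
\]
Since $x_{1}^{k}u^{2}=x_{1}^{k}f^{2}w^{2}$, this rewrites the quadratic form as
\[
\int_{\Theta}x_{1}^{k}(|\nabla u|^{2}-\lambda u^{2}) = \int_{\Theta}x_{1}^{k}f^{2}|\nabla w|^{2} + \int_{\Theta}\left[\frac{-(x_{1}^{k}f'(x_{1}))'}{x_{1}^{k}f(x_{1})}-\lambda\right]x_{1}^{k}f^{2}w^{2}.
\]
For $\lambda \leq \mu$ the bracket is pointwise nonnegative on $\Theta$, so $\int_{\Theta}x_{1}^{k}(|\nabla u|^{2}-\lambda u^{2}) \geq \int_{\Theta}x_{1}^{k}f^{2}|\nabla w|^{2}$.

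The remaining step is to match this with $\bigl(\int_{\Theta}x_{1}^{k}|u|^{2^{\ast}}\bigr)^{2/2^{\ast}}$ using \eqref{f}. The first bound $x_{1}^{k}f(x_{1})^{2} \geq \alpha^{k}f(\alpha)^{2}$, together with the classical Sobolev inequality applied to $w$ extended by zero to $\mathbb{R}^{n}$, gives
\[
\int_{\Theta}x_{1}^{k}f^{2}|\nabla w|^{2} \;\geq\; \alpha^{k}f(\alpha)^{2}\,S\left(\int_{\Theta}|w|^{2^{\ast}}\right)^{2/2^{\ast}}.
\]
The second bound $x_{1}^{k}f(x_{1})^{2^{\ast}} \leq \alpha^{k}f(\alpha)^{2^{\ast}}$ gives $\int_{\Theta}|w|^{2^{\ast}} \geq (\alpha^{k}f(\alpha)^{2^{\ast}})^{-1}\int_{\Theta}x_{1}^{k}|u|^{2^{\ast}}$. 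Combining, the factor $f(\alpha)^{2}$ cancels and $\alpha^{k}/(\alpha^{k})^{2/2^{\ast}} = \alpha^{k(1-2/2^{\ast})} = \alpha^{2k/n}$, which produces the desired weighted Sobolev inequality and hence $\ell_{\lambda}^{[k]} \geq \kappa^{[k]}$.

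There is no deep obstacle; the delicate point is purely algebraic bookkeeping. The two hypotheses in \eqref{f} must be used in opposite directions and with reciprocal exponents, so that after applying Sobolev to $w$ the constant turns out to be exactly $\alpha^{2k/n}S$, reproducing $\kappa^{[k]}$ on the nose. Conceptually, the substitution $u = f(x_{1})w$ is the Picone-type change of variables that isolates the sharp divergence form operator $u \mapsto -(x_{1}^{k}f'(x_{1}))'/(x_{1}^{k}f(x_{1}))$ whose minimum on $[\alpha,\beta]$ controls the largest admissible $\lambda$.
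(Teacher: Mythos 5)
Your argument is correct and follows essentially the same route as the paper's proof: the substitution $u=f(x_{1})w$, integration by parts to produce the term $-(x_{1}^{k}f')'fw^{2}$, pointwise nonnegativity of the resulting potential for $\lambda\leq\mu$, and the two inequalities in \eqref{f} used in opposite directions to recover the constant $\alpha^{2k/n}S$, hence $\ell_{\lambda}^{[k]}\geq\kappa^{[k]}$ and $\lambda_{0,\ast}^{[k]}\geq\mu$. The only cosmetic difference is that you assume $\lambda\in T_{0}$ up front, whereas the paper derives $\lambda<\lambda_{1}^{[k]}$ from the strict positivity of the quotient; this does not affect the conclusion.
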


\begin{proof}
Let $u\in H_{0}^{1}(\Theta),$ $u\neq0,$ and set $u(x)=f(x_{1})w(x)$. Then%
\begin{align*}
\int_{\Theta}x_{1}^{k}\left\vert \nabla u\right\vert ^{2}  &  =\int_{\Theta
}\left(  x_{1}^{k}f^{2}\left\vert \nabla w\right\vert ^{2}+x_{1}^{k}f^{\prime
}f\frac{\partial w^{2}}{\partial x_{1}}+x_{1}^{k}(f^{\prime})^{2}w^{2}\right)
\\
&  =\int_{\Theta}\left(  x_{1}^{k}f^{2}\left\vert \nabla w\right\vert
^{2}+x_{1}^{k}f^{\prime}\frac{\partial(fw^{2})}{\partial x_{1}}\right) \\
&  =\int_{\Theta}\left(  x_{1}^{k}f^{2}\left\vert \nabla w\right\vert
^{2}-\left(  x_{1}^{k}f^{\prime}\right)  ^{\prime}fw^{2}\right)  .
\end{align*}
So, if $\lambda\leq\frac{-\left(  t^{k}f^{\prime}(t)\right)  ^{\prime}}%
{t^{k}f(t)}$ for all $t\in\lbrack\alpha,\beta]$, we have that%
\begin{align*}
&  \frac{\int_{\Theta}x_{1}^{k}\left\vert \nabla u\right\vert ^{2}-\lambda
\int_{\Theta}x_{1}^{k}u^{2}}{\left(  \int_{\Theta}x_{1}^{k}\left\vert
u\right\vert ^{2^{\ast}}\right)  ^{2/2^{\ast}}}=\frac{\int_{\Theta}x_{1}%
^{k}f^{2}\left\vert \nabla w\right\vert ^{2}-\int_{\Theta}\left[  \left(
x_{1}^{k}f^{\prime}\right)  ^{\prime}+\lambda x_{1}^{k}f\right]  fw^{2}%
}{\left(  \int_{\Theta}x_{1}^{k}f^{2^{\ast}}\left\vert w\right\vert ^{2^{\ast
}}\right)  ^{2/2^{\ast}}}\\
&  \qquad\geq\frac{\int_{\Theta}x_{1}^{k}f^{2}\left\vert \nabla w\right\vert
^{2}}{\left(  \int_{\Theta}x_{1}^{k}f^{2^{\ast}}\left\vert w\right\vert
^{2^{\ast}}\right)  ^{2/2^{\ast}}}\geq\frac{\alpha^{k}f^{2}(\alpha
)\int_{\Theta}\left\vert \nabla w\right\vert ^{2}}{\alpha^{2k/2^{\ast}}%
f^{2}(\alpha)\left(  \int_{\Theta}\left\vert w\right\vert ^{2^{\ast}}\right)
^{2/2^{\ast}}}\\
&  \qquad=\alpha^{2k/n}\frac{\int_{\Theta}\left\vert \nabla w\right\vert ^{2}%
}{\left(  \int_{\Theta}\left\vert w\right\vert ^{2^{\ast}}\right)
^{2/2^{\ast}}}\geq\alpha^{2k/n}S>0\qquad\text{for all }u\in H_{0}^{1}%
(\Theta),u\neq0.
\end{align*}
This implies that $\lambda<\lambda_{1}^{\left[  k\right]  }$ and, hence, that%
\[
\max_{t>0}J_{\lambda}(tu)=\frac{1}{n}\left(  \frac{\int_{\Theta}x_{1}%
^{k}\left\vert \nabla u\right\vert ^{2}-\lambda\int_{\Theta}x_{1}^{k}u^{2}%
}{\left(  \int_{\Theta}x_{1}^{k}\left\vert u\right\vert ^{2^{\ast}}\right)
^{2/2^{\ast}}}\right)  ^{n/2}\geq\frac{\alpha^{k}}{n}S^{n/2}%
\]
for all $u\in H_{0}^{1}(\Theta),u\neq0.$ Therefore, $\ell_{\lambda}^{\left[
k\right]  }=\frac{\alpha^{k}}{n}S^{n/2}$ for every $\lambda\leq\min
_{t\in\lbrack\alpha,\beta]}\frac{-\left(  t^{k}f^{\prime}(t)\right)  ^{\prime
}}{t^{k}f(t)},$ and the conclusion follows.
\end{proof}

We obtain the following estimates for $\lambda_{0,\ast}^{\left[  k\right]  }.$

\begin{proposition}
\label{prop:k>1}$\lambda_{0,\ast}^{\left[  k\right]  }\geq0$ and
\[
\lambda_{0,\ast}^{\left[  k\right]  }\geq\left\{
\begin{array}
[c]{ll}%
\frac{(k-1)^{2}}{4\beta^{2}} & \text{if }2k\geq n,\\
\frac{k}{\left(  2^{\ast}\beta\right)  ^{2}}\left(  (2^{\ast}-1)k-2^{\ast
}\right)  & \text{if }2k\leq n.
\end{array}
\right.
\]
Therefore $\lambda_{0,\ast}^{\left[  k\right]  }>0$ if $k\geq2.$
\end{proposition}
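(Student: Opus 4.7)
The plan is to apply Lemma \ref{lem:f} with a carefully chosen parametric family of test functions. Since both the constraints (\ref{f}) and the objective $\frac{-(t^k f'(t))'}{t^k f(t)}$ are homogeneous in $t$ for a power-law $f$, the natural ansatz is $f(t) := t^{-\gamma}$ with a parameter $\gamma > 0$ to be optimized. The assertion $\lambda_{0,\ast}^{[k]} \geq 0$ is actually immediate and does not require the lemma: Proposition \ref{prop:nogroundstate} gives $\ell_\lambda^{[k]} = \kappa^{[k]}$ for every $\lambda \leq 0$, so by definition of $\lambda_{0,\ast}^{[k]}$ no such $\lambda$ witnesses $\ell_\lambda < \kappa^{[k]}$.

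With $f(t) = t^{-\gamma}$, I first verify the admissibility conditions. One has $t^k f^2(t) = t^{k-2\gamma}$ and $t^k f^{2^\ast}(t) = t^{k - 2^\ast \gamma}$, so the first inequality in (\ref{f}) (increasing in $t$) forces $\gamma \leq k/2$, while the second (decreasing in $t$) forces $\gamma \geq k/2^\ast$. Hence $\gamma$ ranges in $[k/2^\ast,\, k/2]$. Next, a direct differentiation gives $t^k f'(t) = -\gamma t^{k-\gamma-1}$ and $(t^k f'(t))' = -\gamma(k-\gamma-1) t^{k-\gamma-2}$, so
\[
\frac{-(t^k f'(t))'}{t^k f(t)} = \frac{\gamma(k-1-\gamma)}{t^2}.
\]
For $0 < \gamma < k-1$ this is positive and minimized on $[\alpha,\beta]$ at $t = \beta$. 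By Lemma \ref{lem:f} we thus obtain $\lambda_{0,\ast}^{[k]} \geq \gamma(k-1-\gamma)/\beta^2$ for any admissible $\gamma$.

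It remains to maximize $g(\gamma) := \gamma(k-1-\gamma)$ on $[k/2^\ast,\, k/2]$. As a downward parabola with peak at $\gamma^\ast = (k-1)/2$, its behavior on our interval is governed by the location of $\gamma^\ast$. Clearly $\gamma^\ast \leq k/2$ always, and a short computation shows $\gamma^\ast \geq k/2^\ast$ if and only if $2k \geq n$. So in the case $2k \geq n$, the unconstrained maximum is admissible, giving $g(\gamma^\ast) = (k-1)^2/4$ and the first bound. In the case $2k \leq n$, the maximum on the feasible interval is attained at the left endpoint $\gamma = k/2^\ast$, and expanding $g(k/2^\ast) = (k/2^\ast)\bigl((2^\ast-1)k/2^\ast - 1\bigr)$ yields the second bound. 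Strict positivity when $k \geq 2$ is read off in both cases: $(k-1)^2 > 0$ trivially, and in the second regime $2^\ast > 2 \geq k/(k-1)$ forces $(2^\ast-1)k - 2^\ast > 0$. I do not foresee any genuine obstacle; the only nontrivial step is spotting that the correct ansatz is the one-parameter family $t^{-\gamma}$, which is essentially forced by dimensional analysis of the two conditions in (\ref{f}).
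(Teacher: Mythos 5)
Your proposal is correct and is essentially the paper's own argument: the nonnegativity is deduced from Proposition \ref{prop:nogroundstate}, and the quantitative bounds come from Lemma \ref{lem:f} with the same test family $f(t)=t^{-\gamma}$, the same admissible range $\gamma\in[k/2^{\ast},k/2]$, and the same optimization of $\gamma(k-1-\gamma)$ over that interval (the paper phrases the optimum as $\gamma_{\ast}=\max\{(k-1)/2,\,k/2^{\ast}\}$, which is your case split $2k\gtrless n$). No substantive differences.
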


\begin{proof}
Proposition \ref{prop:nogroundstate} implies that $\lambda_{0,\ast}^{\left[
k\right]  }\geq0.$

Set $f(t):=t^{-\gamma}$ with $\frac{k}{2^{\ast}}\leq\gamma\leq\frac{k}{2}.$
This function satisfies (\ref{f}) and, since
\[
\frac{-\left(  t^{k}f^{\prime}(t)\right)  ^{\prime}}{t^{k}f(t)}=\frac
{\gamma(k-\gamma-1)}{t^{2}},
\]
Lemma \ref{lem:f}\ implies that%
\[
\lambda_{0,\ast}^{\left[  k\right]  }\geq\frac{\gamma(k-\gamma-1)}{\beta^{2}%
}.
\]
Now observe that the the function $\phi(\gamma):=\gamma(k-\gamma-1)$\ attains
its maximum on the interval $\left[  \frac{k}{2^{\ast}},\frac{k}{2}\right]  $
at the point%
\[
\gamma_{\ast}:=\max\left\{  \frac{k-1}{2},\frac{k}{2^{\ast}}\right\}  .
\]
Therefore $\lambda_{0,\ast}^{\left[  k\right]  }\geq\frac{1}{\beta^{2}}%
\phi(\gamma_{\ast})=\frac{1}{\beta^{2}}\gamma_{\ast}(k-\gamma_{\ast}-1),$ as claimed.

Finally, note that $k>\frac{2^{\ast}}{2^{\ast}-1}=\frac{2n}{n+2}$ if $k\geq2$.
Hence, $\lambda_{0,\ast}^{\left[  k\right]  }>0$ if $k\geq2.$\smallskip
\end{proof}

\begin{proof}
[Proof of Theorem \ref{thm:main}.]Using Corollary \ref{cor:ps} it is easily
seen that, if $v(y,z)=u(\left\vert y\right\vert ,z),$ then $v$ is an
$O(k+1)$-invariant ground state for problem (\ref{prob}) if and only if $u$ is
a ground state for problem (\ref{prob2})$.$ So Theorem \ref{thm:main} follows
immediately from Proposition \ref{prop:nogroundstate}, Theorem
\ref{thm:bifurcation} and Proposition \ref{prop:k>1}.
\end{proof}

The following result shows that $\lambda_{0,\ast}^{\left[  1\right]  }>0$ if
the domain is thin enough in the $x_{1}$-direction.

\begin{proposition}
\label{prop:k=1}If $\frac{\beta}{\alpha}\leq\frac{n}{n-2}$ then $\lambda
_{0,\ast}^{\left[  k\right]  }\geq\frac{k^{2}}{4\beta^{2}}>0$ for all
$k\geq1.$
\end{proposition}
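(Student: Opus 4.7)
The plan is to apply Lemma \ref{lem:f}, but with an \emph{exponential} trial function rather than a pure power. The key observation motivating this is that for $k=1$, no choice of $f(t)=t^{-\gamma}$ with $\gamma\in[k/2^{\ast},k/2]$ produces a positive value for $-(t^{k}f'(t))'/(t^{k}f(t))$, since one computes $\gamma(k-\gamma-1)/t^{2}=-\gamma^{2}/t^{2}<0$. So one must leave the power class; I would try $f(t):=e^{-\mu t}$ with $\mu>0$ to be optimized.

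A short computation yields
\[
\frac{-(t^{k}f'(t))'}{t^{k}f(t)}=\frac{k\mu}{t}-\mu^{2},
\]
which is monotonically decreasing in $t$ and therefore attains its minimum on $[\alpha,\beta]$ at $t=\beta$, with value $k\mu/\beta-\mu^{2}$. Optimizing over $\mu>0$ gives $\mu_{\ast}=k/(2\beta)$ and minimum value $k^{2}/(4\beta^{2})$, which is precisely the bound we want. So the natural candidate is $f(t):=e^{-kt/(2\beta)}$.

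It remains to check that $f$ satisfies conditions (i) and (ii) of Lemma \ref{lem:f}. Condition (i) reduces to the non-decreasingness of $t\mapsto t^{k}f^{2}(t)=t^{k}e^{-kt/\beta}$, whose derivative has the sign of $1-t/\beta$; this is nonnegative on $[\alpha,\beta]$ automatically. Condition (ii) reduces to the non-increasingness of $t\mapsto t^{k}f^{2^{\ast}}(t)=t^{k}e^{-knt/((n-2)\beta)}$, whose derivative has the sign of $1-nt/((n-2)\beta)$, and this is nonpositive throughout $[\alpha,\beta]$ exactly when $\alpha\geq(n-2)\beta/n$, i.e.\ when $\beta/\alpha\leq n/(n-2)$. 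That is exactly the hypothesis of the proposition. Lemma \ref{lem:f} then delivers $\lambda_{0,\ast}^{[k]}\geq k^{2}/(4\beta^{2})>0$.

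The main obstacle is simply identifying the right ansatz; once the exponential $f(t)=e^{-kt/(2\beta)}$ is in hand, every verification reduces to an elementary monotonicity check, and the role of the hypothesis $\beta/\alpha\leq n/(n-2)$ becomes transparent: it is precisely the condition needed to force $t\mapsto t^{k}f^{2^{\ast}}(t)$ to be monotone on $[\alpha,\beta]$ for this optimal choice of $\mu$. A slightly larger ratio $\beta/\alpha$ would make condition (ii) fail, and one could only apply the lemma with a smaller $\mu$, producing a strictly worse lower bound.
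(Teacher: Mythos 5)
Your proof is correct and follows essentially the same route as the paper: the paper's proof of Proposition \ref{prop:k=1} also uses the exponential ansatz (written there as $f(t)=e^{-\gamma(t-\alpha)}$, which differs from your $e^{-\mu t}$ only by a harmless constant factor), verifies condition (\ref{f}) for $\gamma\in[\tfrac{k}{2^{\ast}\alpha},\tfrac{k}{2\beta}]$ -- an interval that is nonempty precisely when $\beta/\alpha\leq n/(n-2)$ -- and then optimizes to $\gamma_{\ast}=k/(2\beta)$, yielding the same bound $k^{2}/(4\beta^{2})$. Your preliminary observation explaining why the power ansatz fails for $k=1$ is a nice, accurate motivation but not part of the paper's argument.
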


\begin{proof}
Set $f(t):=e^{-\gamma(t-\alpha)}$ with $\frac{k}{2^{\ast}\alpha}\leq\gamma
\leq\frac{k}{2\beta},$ and write$\ g(t):=t^{k}f^{2}(t)\ $and$\ h(t):=t^{k}%
f^{2^{\ast}}(t).$ Then $g^{\prime}(t)=t^{k-1}e^{-2\gamma(t-\alpha)}(k-2\gamma
t)\geq0$ and $h^{\prime}(t)=t^{k-1}e^{-2^{\ast}\gamma(t-\alpha)}(k-2^{\ast
}\gamma t)\leq0$ for all $t\in\lbrack\alpha,\beta],$ so $f$ satisfies
(\ref{f}). Since%
\[
\frac{-\left(  t^{k}f^{\prime}(t)\right)  ^{\prime}}{t^{k}f(t)}=\frac{\gamma
t^{k-1}e^{-\gamma(t-\alpha)}(k-\gamma t)}{t^{k}e^{-\gamma(t-\alpha)}}%
=\frac{\gamma(k-\gamma t)}{t},
\]
Lemma \ref{lem:f}\ implies that%
\[
\lambda_{0,\ast}^{\left[  k\right]  }\geq\frac{\gamma(k-\gamma\beta)}{\beta}.
\]
Now observe that the the function $\phi(\gamma):=\gamma(k-\gamma\beta
)$\ attains its maximum at the point $\gamma_{\ast}:=\frac{k}{2\beta}.$ Hence,
$\lambda_{0,\ast}^{\left[  k\right]  }\geq\frac{k^{2}}{4\beta^{2}}>0,$ as claimed.
\end{proof}

\section{Some open questions and comments}

Many questions remain open. Here are some of them.

\begin{problem}
Concerning problem $($\ref{prob2}$)$:

\begin{enumerate}
\item Is it true that $\lambda_{0,\ast}^{\left[  1\right]  }>0$ for any domain
$\Theta,$ and not only for thin domains?

\item For $m\geq1,$ is $\lambda_{m,\ast}^{\left[  k\right]  }>\lambda
_{m}^{\left[  k\right]  },$ or is $\lambda_{m,\ast}^{\left[  k\right]
}=\lambda_{m}^{\left[  k\right]  }$ as in the classical Brezis-Nirenberg case?

\item What happens in general at $\lambda_{m,\ast}^{\left[  k\right]  }?$ Is
there, or not, a ground state of problem $($\ref{prob2}$)$ for $\lambda
=\lambda_{m,\ast}^{\left[  k\right]  }?$
\end{enumerate}
\end{problem}

\begin{problem}
Concerning the general anisotropic problem \emph{(\ref{ancrit})}:

\begin{enumerate}
\item Is $\lambda_{0,\ast}^{a,b,c}$ always nonnegative? Or are there examples
where a ground state exists for some $\lambda<0$? For all $\lambda<0?$

\item Can one give lower estimates for $\lambda_{m,\ast}^{a,b,c}$ in some cases?

\item Suppose that $c\in\mathcal{C}^{1}(\overline{\Theta})$ in addition to our
earlier assumptions. If $\kappa^{a,c}$ is attained only at points which are
non-stationary for $\frac{a(x)^{n/2}}{c(x)^{(n-2)/2}}$ and lie on the boundary
of $\Theta$, is it then true that $\lambda_{0,\ast}^{a,b,c}>0$?
\end{enumerate}
\end{problem}

Two particular cases of (3) are: $c=1,$ and $a=b=c$. If the answer is positive
in the first case, this would be in contrast to the results in \cite{e} and
\cite{hy}. A positive answer in the second case would be a generalization of
our results for \eqref{prob2}. A partial answer can be given using Proposition
\ref{prop:k>1}. Consider, for example, the problem%
\begin{equation}
-\text{div}(a(x)\nabla u)=\lambda b(x)u+\left\vert u\right\vert ^{{2}^{\ast
}-2}u\quad\text{in}\ \Theta,\qquad u=0\quad\text{on}\ \partial\Theta,
\label{prob4}%
\end{equation}
where $\Theta$ is a bounded smooth domain in $\mathbb{R}^{n},$ $n\geq3,$
$\lambda\in\mathbb{R}$, $a\in\mathcal{C}^{1}(\overline{\Theta}),$
$b\in\mathcal{C}^{0}(\overline{\Theta})$ are strictly positive on
$\overline{\Theta},$ and $2^{\ast}=\frac{2n}{n-2}.$ Then, the following
statement holds true.

\begin{proposition}
If $a(x)\geq x_{1}^{k}\geq b(x)$ for all $x\in\Theta$ and $\min_{x\in
\overline{\Theta}}a(x)=(\min_{x\in\overline{\Theta}}x_{1})^{k}>0$ for some
$k\geq2$, then $\lambda_{0,\ast}^{a,b,1}>0.$
\end{proposition}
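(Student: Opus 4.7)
The plan is to reduce to the problem (\ref{prob2}) with $a=b=c=x_{1}^{k}$ already treated in Section \ref{sec:proof}, for which Proposition \ref{prop:k>1} supplies $\lambda_{0,\ast}^{[k]}>0$ when $k\geq 2$. Set $\alpha:=\min_{\overline{\Theta}}x_{1}$; the hypothesis then gives $\min_{\overline{\Theta}}a=\alpha^{k}$, and hence $\kappa^{a,1}=\frac{\alpha^{nk/2}}{n}S^{n/2}$. Pick any $\lambda_{0}\in\bigl(0,\min\{\lambda_{0,\ast}^{[k]},\lambda_{1}^{a,b}\}\bigr)$. I claim that $\ell_{\lambda}^{a,b,1}\geq\kappa^{a,1}$ for every $\lambda\in(-\infty,\lambda_{0}]$. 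Combined with $\ell_{\lambda}^{a,b,1}\leq\kappa^{a,1}$ from Theorem \ref{thm:bifurcation}(a), this forces equality on that interval, whence by the definition of $\lambda_{0,\ast}^{a,b,1}$ we obtain $\lambda_{0,\ast}^{a,b,1}\geq\lambda_{0}>0$, which is the desired conclusion.

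For the claim, since $\lambda\mapsto\ell_{\lambda}^{a,b,1}$ is nonincreasing (Theorem \ref{thm:bifurcation}(a)), it suffices to treat $\lambda\in[0,\lambda_{0}]$. Fix such a $\lambda$ and any $u\in H_{0}^{1}(\Theta)\setminus\{0\}$. Since $Z_{0}=\{0\}$, one has
\[
\max_{t>0}J_{\lambda}^{a,b,1}(tu)=\frac{1}{n}\left(\frac{\int_{\Theta}a|\nabla u|^{2}-\lambda\int_{\Theta}bu^{2}}{\bigl(\int_{\Theta}|u|^{2^{\ast}}\bigr)^{2/2^{\ast}}}\right)^{n/2}.
\]
Three elementary estimates combine to bound this from below. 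First, $a(x)\geq x_{1}^{k}\geq b(x)$ with $\lambda\geq 0$ yields
\[
\int_{\Theta}a|\nabla u|^{2}-\lambda\int_{\Theta}bu^{2}\;\geq\;\int_{\Theta}x_{1}^{k}|\nabla u|^{2}-\lambda\int_{\Theta}x_{1}^{k}u^{2}.
\]
Second, since $\lambda\leq\lambda_{0,\ast}^{[k]}$, the equality $\ell_{\lambda}^{[k]}=\kappa^{[k]}$ on this interval (Proposition \ref{prop:k>1} combined with Theorem \ref{thm:bifurcation}(a)) rewrites as the weighted Sobolev-type inequality
\[
\int_{\Theta}x_{1}^{k}|\nabla u|^{2}-\lambda\int_{\Theta}x_{1}^{k}u^{2}\;\geq\;\alpha^{2k/n}S\left(\int_{\Theta}x_{1}^{k}|u|^{2^{\ast}}\right)^{2/2^{\ast}}.
\]
Third, the pointwise bound $x_{1}^{k}\geq\alpha^{k}$ gives
\[
\left(\int_{\Theta}x_{1}^{k}|u|^{2^{\ast}}\right)^{2/2^{\ast}}\;\geq\;\alpha^{2k/2^{\ast}}\left(\int_{\Theta}|u|^{2^{\ast}}\right)^{2/2^{\ast}}.
\]

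Chaining the three estimates and invoking the algebraic identity $\frac{2}{n}+\frac{2}{2^{\ast}}=1$, so that $\alpha^{2k/n}\cdot\alpha^{2k/2^{\ast}}=\alpha^{k}$, I conclude
\[
\max_{t>0}J_{\lambda}^{a,b,1}(tu)\;\geq\;\frac{1}{n}(\alpha^{k}S)^{n/2}=\frac{\alpha^{nk/2}}{n}S^{n/2}=\kappa^{a,1},
\]
and taking the infimum over $u$ establishes $\ell_{\lambda}^{a,b,1}\geq\kappa^{a,1}$, as required.

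The only real obstacle is ensuring that the powers of $\alpha$ accumulated from comparing $a$ with $x_{1}^{k}$ and from dropping the weight in the $L^{2^{\ast}}$-integral combine to reproduce exactly the factor $\alpha^{k}$ demanded by $\kappa^{a,1}$. The identity $\frac{2}{n}+\frac{2}{2^{\ast}}=1$ is precisely what makes this bookkeeping succeed; it is also why the argument runs cleanly for the pure critical nonlinearity $c\equiv 1$, whereas a nontrivial weight $c$ whose minimum is attained on $\partial\Theta$ would in general spoil the cancellation unless controlled separately.
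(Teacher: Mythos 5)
Your proof is correct and follows essentially the same route as the paper: compare the quadratic form for $(a,b,1)$ with the one for the weight $x_1^k$, invoke the equality $\ell_\lambda^{[k]}=\kappa^{[k]}$ on $[0,\lambda_{0,\ast}^{[k]}]$ as a weighted Sobolev inequality, and absorb the factor from replacing $x_1^k$ by $\alpha^k$ in the $L^{2^\ast}$-term via $\tfrac{2}{n}+\tfrac{2}{2^\ast}=1$. The only (immaterial) difference is that you fix $\lambda_0<\min\{\lambda_{0,\ast}^{[k]},\lambda_1^{a,b}\}$ in advance, whereas the paper works on all of $[0,\lambda_{0,\ast}^{[k]}]$ and deduces $\lambda<\lambda_1^{a,b}$ from the positivity of the quotient, obtaining the slightly sharper bound $\lambda_{0,\ast}^{a,b,1}\geq\lambda_{0,\ast}^{[k]}$.
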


\begin{proof}
Let $\alpha:=\min_{x\in\overline{\Theta}}x_{1}>0.$ For every $u\in H_{0}%
^{1}(\Theta),$ $u\neq0,$ $\lambda\in\lbrack0,\lambda_{0,\ast}^{[k]}]$ we have
that
\[
\frac{\int_{\Theta}a(x)\left\vert \nabla u\right\vert ^{2}-\lambda\int
_{\Theta}b(x)u^{2}}{\alpha^{2k/2^{\ast}}\left(  \int_{\Theta}\left\vert
u\right\vert ^{2^{\ast}}\right)  ^{2/2^{\ast}}}\geq\frac{\int_{\Theta}%
x_{1}^{k}\left\vert \nabla u\right\vert ^{2}-\lambda\int_{\Theta}x_{1}%
^{k}u^{2}}{\left(  \int_{\Theta}x_{1}^{k}\left\vert u\right\vert ^{2^{\ast}%
}\right)  ^{2/2^{\ast}}}>0.
\]
Hence, $\lambda<\lambda_{1}^{a,b}$ and
\begin{align*}
\frac{1}{n}\left(  \frac{\int_{\Theta}a(x)\left\vert \nabla u\right\vert
^{2}-\lambda\int_{\Theta}b(x)u^{2}}{\left(  \int_{\Theta}\left\vert
u\right\vert ^{2^{\ast}}\right)  ^{2/2^{\ast}}}\right)  ^{n/2}  &  \geq
\alpha^{\frac{k(n-2)}{2}}\frac{1}{n}\left(  \frac{\int_{\Theta}x_{1}%
^{k}\left\vert \nabla u\right\vert ^{2}-\lambda\int_{\Theta}x_{1}^{k}u^{2}%
}{\left(  \int_{\Theta}x_{1}^{k}\left\vert u\right\vert ^{2^{\ast}}\right)
^{2/2^{\ast}}}\right)  ^{n/2}\\
&  \geq\alpha^{\frac{k(n-2)}{2}}\frac{\alpha^{k}}{n}S^{n/2}=\left(  \alpha
^{k}\right)  ^{n/2}\frac{1}{n}S^{n/2}=\kappa^{a,1}.
\end{align*}
It follows from Theorem \ref{thm:bifurcation} that
\[
\ell_{\lambda}^{a,b,1}=\kappa^{a,1}\text{\qquad for all }\lambda\in
(-\infty,\lambda_{0,\ast}^{[k]}].
\]
Hence, by Proposition \ref{prop:k>1}, $\lambda_{0,\ast}^{a,b,1}\geq
\lambda_{0,\ast}^{[k]}>0,$ as claimed.
\end{proof}

\end{document}